\newcommand{\R}{\mathbb{R}}
\newcommand{\s}{\mathbb{S}}
\newcommand{\Z}{\mathbb{Z}}
\newcommand{\ve}{{\varepsilon}}
\newcommand{\pa}{{\partial}}
\newcommand{\EE}{{\mathcal{E}}}
\newcommand{\FF}{{\mathcal{F}}}
\newcommand{\LL}{{\mathcal{L}}}
\newcommand{\OO}{{\mathcal{O}}}
\newcommand{\MM}{{\mathcal{M}}}
\newcommand{\NN}{{\mathcal{N}}}
\newcommand{\RR}{{\mathcal{R}}}
\renewcommand{\SS}{{\mathcal{S}}}
\newcommand{\TT}{{\mathcal{T}}}
\newcommand{\UU}{{\mathcal{U}}}
\newcommand{\VV}{{\mathcal{V}}}
\DeclareMathOperator{\Diff}{D}
\DeclareMathOperator{\id}{id}
\DeclareMathOperator{\sgn}{sgn}
\DeclareMathOperator*{\range}{ran}
\DeclareMathOperator{\spn}{span}
\newtheorem{rem}[theorem]{Remark}
\newtheorem{assumption}[theorem]{Assumption}
\newtheorem{example}[theorem]{Example}
\title{Steady water waves with multiple critical layers}
\author{Mats Ehrnstr\"om\thanks{Institut f\"ur Angewandte Mathematik, Leibniz Universit\"at Hannover, Welfengarten 1, 301 67 Hannover, Germany
({\tt ehrnstrom@ifam.uni-hannover.de})} \and Joachim Escher\thanks{Institut f\"ur Angewandte Mathematik, Leibniz Universit\"at Hannover, Welfengarten 1, 301 67 Hannover, Germany
({\tt escher@ifam.uni-hannover.de})} 
\and Erik Wahl\'en\thanks{Centre for Mathematical Sciences, Lund University, PO 
Box 118, 221\,00 Lund, Sweden ({\tt erik.wahlen@math.lu.se})}}
\definecolor{MyLinkColor}{rgb}{0,0,0.4}
\begin{document}

\maketitle

\begin{abstract}
We construct small-amplitude periodic water waves with multiple critical layers. 
In addition to waves with arbitrarily many critical layers and a single crest in each period, 
multimodal waves with several crests and troughs in each period are found. 
The setting is that of steady two-dimensional finite-depth gravity water waves with vorticity.       
\end{abstract}

\section{Introduction}
The periodic steady water-wave problem describes wave-trains of two-dimensional surface water waves propagating with constant shape and velocity. 
The mathematical existence of such steady water waves is well-established, and one distinguishes between infinite and finite depth, gravitational and 
capillary waves, and flows with and without vorticity. Historically, the main focus has been on irrotational waves, i.e.\ on flows without vorticity. 
An important class of such waves is given by the \emph{Stokes waves}: periodic gravity-waves with a single crest in each period around which the wave is symmetric. 
Of particular mathematical interest is the so-called \emph{wave of greatest height} with a sharp crest of interior angle $120$ degrees. 
The paper \cite{MR1422004} describes much of that theory; see also \cite{MR2097656} for a general overview of the water-wave problem. 

A few years ago the authors of \cite{ConstantinStrauss04} proved the existence of water waves with a general vorticity distribution 
(for a different approach, cf.\ \cite{MR2500496}). 
This opened the way, not only for a new branch of research, but also for the existence of new types of waves. In many aspects, the features of rotational 
waves resemble those of irrotational waves: results on symmetry~\cite {MR2362244,MR2256915,MR2329142}, regularity~\cite{EC2010}, and waves of 
greatest height~\cite{Varvaruca20094043} have now been extended from irrotational waves to waves with a general vorticity distribution. 
In most of those cases the differences have manifested themselves as technical difficulties, not different properties \emph{per se}.      

An exclusion from this rule is the appearance of internal \emph{stagnation points}---fluid particles with zero velocity relative to the 
steady flow---and \emph{critical layers}. By a critical layer we mean a connected part of the fluid domain consisting only of closed streamlines.
The existence of such waves within linear theory was established in~\cite{MR2409513}, where the streamlines and particle paths of waves 
with constant vorticity were studied. For irrotational flows such behavior is not encountered, cf.\ \cite{MR2257390, Constantin:2010lr}. 
Also earlier studies had indicated that internal stagnation should be possible (see, e.g., \cite{MR851672}), but it was first with~\cite{Wahlen09} 
that waves with critical layers were given a firm mathematical foundation (see also \cite{CV09}). A different approach in dealing with the free boundary allowed for including 
internal stagnation, a phenomenon which for technical reasons had not been possible to see with earlier methods for rotational waves (cf., e.g., \cite{ConstantinStrauss04, Wahlen06b}).         

Another feature that separates waves with vorticity from those without is that the corresponding linear problem allows for multi-dimensional kernels. 
Whereas capillarity naturally introduces more degrees of freedom in the linear problem \cite{JonesToland86, Jones89, JonesToland85, Wahlen06b,MR2292728,W09}, 
periodic irrotational gravity-waves allow only for kernels with a single Fourier mode \cite{07022009}. Using a result from inverse spectral theory, 
it was shown in \cite{07022009} that one can construct rotational background flows for which the kernel of the linear problem contains several 
Fourier modes, even in the absence of surface tension. It was, however, not clear to what extent those flows would be ``natural'' or ``exotic'', 
or whether they would allow for exact nonlinear waves. 

In this paper we extend the results from \cite{Wahlen09} and combine them with those from \cite{07022009}. 
The novelty is two-fold: we construct i) waves with arbitrarily many critical layers, and ii) families of waves 
bifurcating from kernels with two different Fourier modes. The latter type of wave displays a more complicated surface profile than those of 
Stokes waves (cf. Figure~\ref{fig:two-dim}, p.~\pageref{fig:two-dim}). This qualitatively different behavior arises not as a result of involved 
rotational flow, but the vorticity functions are linear maps. Thus, the paper at hand is a natural extension of \cite{Wahlen09}, which dealt with 
flows of constant vorticity. We settle here for the mere existence of such waves. A qualitative investigation of the small-amplitude waves 
corresponding to the first Fourier mode will be given in a separate study \cite{eev10}.     

For a given basic wave number $\kappa>0$, our problem has three non-dimensional parameters. 
Treating the first of these as a bifurcation parameter, while fixing the other two, we prove that a curve of solutions of the nonlinear problem bifurcates from any point where the 
kernel of the linearized problem is one-dimensional using the bifurcation theory of Crandall and Rabinowitz \cite{CrandallRabinowitz71}. 

At points where the kernel is two-dimensional we similarly fix the third parameter and treat the other two as bifurcation parameters. 
Assuming that the linearized problem has two solutions with wave numbers $k_1, k_2 \in \kappa\Z_+$, such that $k_1<k_2$ and $k_2/k_1\not \in \Z$, 
we obtain in this way three two-dimensional sheets of bifurcating solutions. The first and the second of these two sheets consist of ``regular'' solutions of minimal period 
$2\pi/k_1$ and $2\pi/k_2$, respectively, with one crest and one trough per period, while the third consists of multimodal solutions, that is, 
solutions with several crests and troughs per period (and with minimal period greater than $2\pi/k_1$), except for the points where the sheets intersect. 
The points of intersection can be interpreted as secondary symmetry-breaking bifurcation points along branches of regular solutions with period 
$2\pi/k_1$ or $2\pi/k_2$. We also obtain some results in the more complicated case when $k_2/k_1\in \Z$. 
Our approach is based on the Lyapunov-Schmidt reduction and the implicit function theorem. The same method was recently used to study the bifurcation of steady irrotational water waves under a 
heavy elastic membrane \cite{BaldiToland10}.

For a comparison with the irrotational case, we would like to draw the attention to the investigation \cite{BDT00}. There a global bifurcation theory for Stokes waves was developed. 
The authors prove that the main bifurcation branch of, say, $2\pi$-periodic Stokes waves contains countably many subharmonic bifurcation points, from which waves with minimal period 
$2\pi q$, $q$ a prime number, bifurcate. These bifurcations do not take place in the vicinity of laminar flows, but as one approaches the wave of greatest height along the 
main bifurcation branch. 
In the presence of surface tension, however, the linear problem has a two-dimensional kernel for certain critical values of the surface tension parameter. 
The associated bifurcation problem has been studied in detail \cite{JonesToland86, Jones89, JonesToland85}. 
Using the Lyapunov-Schmidt reduction and blowing-up techniques the problem can be reduced to a finite-dimensional system of polynomial equations. 
For fixed values of the surface tension, branches of regular solutions bifurcate when the wave speed is used as bifurcation parameter. Multimodal solutions appear either as primary bifurcation branches or through secondary bifurcations from primary branches of regular solutions, depending on the value of the bifurcation parameter. 

The plan of this paper is as follows. Section~\ref{sec:preliminaries} describes the mathematical problem, the laminar background flows and the transformation of the free-boundary problem to a fixed domain. In Section~\ref{sec:functional} we fix the functional-analytical setting and introduce some auxiliary maps. Section~\ref{sec:bifurcation} contains the main results, which in Section~\ref{sec:applications} are applied to yield three different families of solutions to the original problem.  

\section{Preliminaries}\label{sec:preliminaries}

The {\em steady water-wave problem} is to find a surface profile $\eta$ and a velocity field $(u,v)$, defined in the fluid domain
$\Omega=\{ (x,y)\in \R^2\colon 0<y<d+\eta(x)\}$, where $d$ is the depth of the undisturbed fluid, satisfying Euler's equations 
\begin{subequations}\label{eq:waterproblem}
\begin{equation}
\label{eq:Euler}
\begin{aligned}
(u-c)u_x+vu_y &= -P_x \\
(u-c)v_x+vv_y &= -P_y-g 
\end{aligned}\qquad \text{ in } \Omega, 
\end{equation}
and 
\begin{equation}
\label{eq:massconservation}
u_x+v_y=0
\qquad \text{in } \Omega, 
\end{equation}
together with the kinematic boundary conditions
\begin{equation}
\label{eq:kinematicbottom}
v=0
\qquad \text{at }\,  y=0 
\end{equation}
and
\begin{equation}
\label{eq:kinematicsurface}
v=(u-c)\eta_x
\qquad \text{at }\, y=d+\eta(x), 
\end{equation}
as well as the dynamic boundary condition
\begin{equation}
\label{eq:dynamicsurface}
P=0 \qquad \text{at }\, y=d+\eta(x).
\end{equation}
\end{subequations}
Here $P$ is the pressure, $c>0$ is the wave speed and $g$ is the gravitational constant of acceleration. 
\medskip

\subsubsection*{Stream-function formulation}
Denote by $B$ the flat bed $y=0$ and the by $S$ the free surface $y = d + \eta(x)$. The continuity equation~\eqref{eq:massconservation} implies the existence of a {\em stream function} $\psi$ with 
the properties that $\psi_x=-v$ and $\psi_y=u-c$. 
The {\em vorticity} of the flow, $\omega:=v_x-u_y$, describes the rotation of the water particles. Note that $\omega=-\psi_{xx}-\psi_{yy}$.
The assumption that $u<c$ in $\Omega$ implies the existence of a function $\gamma$, such that
$\omega=\gamma(\psi)$ (see \cite{ConstantinStrauss04}). 
Under this assumption, the equations \eqref{eq:waterproblem} are equivalent to the following boundary value problem for $\psi$:
\begin{equation}
\label{eq:psiproblem}
\begin{aligned}
\psi_{xx}+\psi_{yy}&=-\gamma(\psi) \quad &&\text{in} \quad \Omega,\\
\psi&=m_0 \quad &&\text{on} \quad B, \\
\psi&=m_1 \quad &&\text{on} \quad S, \\
{\textstyle \frac{1}{2}} |\nabla \psi|^2+ g\eta &=Q \quad &&\text{on} \quad S,  
\end{aligned}
\end{equation}
with $\psi_y< 0$ in $\Omega$. Here $m_0$ and $m_1$ are real constants and the {\em vorticity function} $\gamma$ is an arbitrary real-valued function defined on the interval $[m_1,m_0]$. 
The difference $m_1-m_0$ is the \emph{relative mass flux} and $Q$ is a constant related to the energy. Dropping the assumption that 
$u-c<0$ one still finds that a solution of the problem \eqref{eq:psiproblem} generates a solution 
of \eqref{eq:waterproblem}. The simplest example of a non-zero vorticity 
function is a constant function. This case was first investigated in \cite{MR2409513} for linear waves, and then put on a rigorous foundation in \cite{Wahlen09}. 
Here we treat the second easiest example, namely that of an affine function $\gamma$. This choice of $\gamma$ is primarily motivated by its mathematical simplicity rather than by any physical 
arguments. 
However, an affine function can of course be seen as a Taylor approximation of a more general $\gamma$ and in this sense our choice seems natural.

\subsubsection*{Linear vorticity function}
Notice that \eqref{eq:psiproblem} with an affine, non-constant vorticity function can be reduced to the same problem with a linear vorticity function by redefining $\psi$. 
We therefore consider the following problem, where $\alpha$ is an arbitrary constant 
and we have normalized gravity to unit size:
\begin{equation}
\label{eq:problem}
\begin{aligned}
\psi_{xx}+\psi_{yy} &= \alpha \psi \qquad &&\text{in} \: &\Omega,\\
\psi &= m_0 \qquad &&\text{on} \: &B, \\
\psi &= m_1 \qquad &&\text{on} \: &S, \\
{\textstyle \frac{1}{2}} |\nabla \psi|^2+\eta &=Q \qquad &&\text{on} \: &S.  
\end{aligned}
\end{equation}
We also normalize the undisturbed depth to unity, so that the surface $S$ is given by $y = 1 + \eta(x)$ and the fluid domain 
by $\Omega=\{ (x,y)\in \R^2\colon 0<y<1+\eta(x)\}$.
We pose the problem for $\eta \in C^{2+\beta}_{\text{even}}(\s,\R)$ and $\psi \in C^{2+\beta}_{\text{per,even}}(\overline \Omega, \R)$,
where the subscripts \emph{per} and \emph{even} denotes $2\pi/\kappa$-periodicity and evenness in the horizontal variable, 
and we have identified $2\pi/\kappa$-periodic functions with functions defined on the unit circle $\s$. 
Here, $\kappa$ is a positive real number. We also require that $\min \eta > -1$. The parameter $\alpha$ influences the nature of \eqref{eq:problem} in a substantial way. 
For example, the trivial solutions are monotone in $y$ when $\alpha>0$ whereas they oscillate when $\alpha$ is sufficiently negative (cf.\ \eqref{eq:trivial}). 
In order to simplify the discussion below we therefore make the following assumption.
\begin{assumption}
\label{assumption 1}
We assume that $\alpha<0$.
\end{assumption}

\noindent When $\alpha=0$ the vorticity function is constant and one cannot transform \eqref{eq:psiproblem} 
to an equivalent problem with a linear vorticity function. This case is, however, covered in \cite{Wahlen09}. 
When $\alpha >0$ there are only simple bifurcation points (that is, with one-dimensional kernels). 
It is not difficult to prove that a curve of nontrivial, regular solutions bifurcates from each of these points, 
just like in Theorem \ref{thm:one-dimensional}. 

\subsubsection*{Laminar flows}
The solutions constructed in this paper are perturbations of laminar
flows in which the velocity field is horizontal but depth-dependent. 
The laminar flows are 
trivial solutions of problem \eqref{eq:problem} in the sense that 
$\eta=0$ and $\psi$ is independent of $x$. Mathematically, they are given 
by the formula 
\begin{equation}
\label{eq:trivial}
\psi_0(y;\mu, \lambda, \alpha) := \mu \cos(\theta_0 (y-1)+\lambda),
\end{equation}
where $\mu, \lambda \in \R$ are arbitrary constants and 
\[
\theta_k:=|\alpha+k^2|^{1/2}, \qquad k \in \R.
\]
The numbers $Q=Q(\mu, \alpha, \lambda)$, $m_0=m_0(\mu, \alpha, \lambda)$ 
and  $m_1=m_1(\mu,\lambda)$ 
are determined so that the function
$\psi$ defined by \eqref{eq:trivial} solves equation
\eqref{eq:problem}, i.e.
\begin{equation}\label{eq:Q}
Q(\mu, \lambda, \alpha) := \frac{\mu^2\theta_0^2\sin^2(\lambda)}{2}
\end{equation}
and
\[
m_1(\mu, \lambda) := \mu\cos(\lambda), \quad m_0(\mu, \alpha, \lambda) := \mu\cos(\lambda-\theta_0)
\]  
We make the following assumption on the parameters.


\begin{assumption}
\label{assumption 2}
We assume that $(\psi_0)_y(1)\ne 0$, that is $\mu\theta_0\sin(\lambda)\ne 0$.
\end{assumption}

\noindent It can be shown that without this assumption the linearized operator $\Diff_w \FF$ appearing in the bifurcation problem is not Fredholm; see Remark \ref{rem:Assumption 2}.

\subsubsection*{The flattening transform}
Problem \eqref{eq:problem} is a free-boundary problem in the sense that $\eta$ is \emph{a priori} unknown.
In order to find a secure functional-analytic setting in which to study the problem 
we use the change of variables
\[
(x,y)\mapsto \left(x,\frac{y}{1+\eta(x)}\right),
\]
which maps $\Omega$ onto the strip 
$\hat \Omega=\{(x,s)\in \R^2\colon s\in (0,1)\}$.
In the coordinates $(x,s)$ the problem~\eqref{eq:problem} takes the form
\begin{equation}
\label{eq:flatproblem}
\begin{aligned}
\left(\hat \psi_{x}-\frac{s\eta_x\hat\psi_s}{1+\eta}\right)_x-\frac{s\eta_x}{1+\eta}\left(\hat \psi_{x}-\frac{s\eta_x\hat\psi_s}{1+\eta}\right)_s+\frac{\hat\psi_{ss}}{(1+\eta)^2} &= \alpha \hat \psi  &&\text{in }   \hat \Omega,\\
\hat \psi &= m_0  &&\text{on } s=0, \\
\hat \psi &= m_1  &&\text{on }  s=1, \\
\frac12 \left(\hat\psi_x-\frac{s\eta_x\hat\psi_s}{1+\eta}\right)^2+\frac{\hat \psi_s^2}{2(1+\eta)^2}+\eta &= Q && \text{on }  s=1.  
\end{aligned}
\end{equation}
Let $\EE((\eta,\hat \psi),\alpha,Q) = 0$ denote the first and last equation of \eqref{eq:flatproblem}. 
We have the following equivalence of the two problems:

\begin{lemma}[Equivalence]
\label{lemma:flattening}
When $\min \eta(x) > -1$ the steady water-wave problem~\eqref{eq:problem} is equivalent to that $\EE((\eta,\hat \psi),\alpha,Q) = 0$ for 
\[
(\eta, \hat \psi) \in C^{2+\beta}_{\text{even}} (\s,\R) \times \left\{ C^{2+\beta}_{\text{per,even}}
(\overline{\hat \Omega}, \R)\colon \hat\psi|_{s=0} = m_0, \hat \psi|_{s=1} = m_1\right\},
\]
and  $\EE((0,\hat\psi),\alpha, Q) = 0$ for $\hat \psi = \hat \psi(s)$ if and only if $\hat \psi(s) = \psi_0(s)$. 
\end{lemma}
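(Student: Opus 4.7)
The plan is to reduce the lemma to a routine change-of-variables computation together with a two-line ODE argument, taking care to match boundary conditions with the function-space constraints on both sides.

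First I would observe that, because $\eta \in C^{2+\beta}(\s,\R)$ satisfies $\min\eta > -1$, the map
\[
\Phi \colon \overline{\hat\Omega}\to \overline\Omega,\qquad (x,s)\mapsto \bigl(x,(1+\eta(x))s\bigr),
\]
is a $C^{2+\beta}$-diffeomorphism with Jacobian $1+\eta(x)>0$. Pull-back by $\Phi$ then gives a bijection between $C^{2+\beta}_{\mathrm{per,even}}(\overline{\Omega},\R)$ and $C^{2+\beta}_{\mathrm{per,even}}(\overline{\hat\Omega},\R)$, sending the trace of $\psi$ on $B=\{y=0\}$ to the trace of $\hat\psi$ on $\{s=0\}$ and the trace on $S=\{y=1+\eta\}$ to the trace on $\{s=1\}$. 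Consequently the two Dirichlet conditions $\psi=m_0$ on $B$ and $\psi=m_1$ on $S$ are \emph{absorbed into the function space} on the right-hand side of the equivalence, and it suffices to identify the PDE and the Bernoulli condition.

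Next I would perform the chain-rule computation. Setting $\hat\psi(x,s)=\psi\circ\Phi(x,s)$, a direct calculation gives
\[
\psi_y=\frac{\hat\psi_s}{1+\eta},\qquad \psi_x=\hat\psi_x-\frac{s\eta_x\hat\psi_s}{1+\eta},
\]
and plugging these into $\psi_{xx}+\psi_{yy}=\alpha\psi$ (written in divergence-free fashion, using $\partial_x\bigl(\psi_y\bigr)=\partial_y\bigl(\psi_x\bigr)$ implicitly) yields exactly the first equation in \eqref{eq:flatproblem}. Evaluating $|\nabla\psi|^2/2+\eta=Q$ on $\{y=1+\eta\}$ (equivalently $s=1$) produces the last equation of \eqref{eq:flatproblem}. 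Since $\Phi$ is invertible, the procedure is reversible, which gives the claimed equivalence $\EE((\eta,\hat\psi),\alpha,Q)=0$.

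Finally, for the characterization of the trivial solutions, set $\eta\equiv 0$ and $\hat\psi=\hat\psi(s)$. Then the PDE in $\EE$ collapses to the linear ODE $\hat\psi''(s)=\alpha\hat\psi(s)$. Under Assumption~\ref{assumption 1} we have $\alpha<0$, so $\theta_0=|\alpha|^{1/2}>0$ and the general solution may be written as $\mu\cos(\theta_0(s-1)+\lambda)$ for real constants $\mu,\lambda$, which is precisely $\psi_0(s;\mu,\lambda,\alpha)$ from \eqref{eq:trivial}. The Bernoulli condition at $s=1$ reads $\tfrac12\hat\psi'(1)^2=Q$, which, using $\hat\psi'(1)=-\mu\theta_0\sin\lambda$, reproduces \eqref{eq:Q}. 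Conversely $\psi_0$ manifestly satisfies both equations, completing the equivalence. The only step that requires any care is the chain-rule verification, which is routine algebra rather than a genuine obstacle.
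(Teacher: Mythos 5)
Your proposal is correct and follows essentially the same route as the paper: the paper's proof likewise invokes the regularity of the change of variables (via the Banach-algebra property of H\"older spaces), refers the PDE and Bernoulli identifications to a direct chain-rule calculation, and reads off the $x$-independent solutions of \eqref{eq:flatproblem} as \eqref{eq:trivial} with $y$ replaced by $s$. You simply spell out the computations that the paper leaves implicit, including the explicit ODE argument for the trivial solutions and the verification of \eqref{eq:Q}.
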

\begin{proof}
Since $1 + \eta(x) \neq 0$, $\eta$ is of class $C^{2+\beta}$, and H{\"o}lder spaces are Banach algebras, the identification $\hat \psi(x,s) = \hat \psi(x, y/(1+\eta(x))) = \psi(x,y)$ gives the same regularity of the functions $\psi$ and $\hat \psi$ up to order $2+\beta$. 
The equivalence then follows from direct calculation, and one can see from \eqref{eq:flatproblem} that its $x$-independent solutions are also given by the formula \eqref{eq:trivial} with $y$ replaced by $s$. 
\end{proof}

\section{Functional-analytic setting}\label{sec:functional}

\subsubsection*{The map $\FF$}
We want to linearize the problem $\EE((\eta,\hat \psi),\alpha, Q) = 0$ around a trivial solution $\psi_0$, whence we introduce a disturbance $\hat \phi$ through $\hat\psi = \psi_0 + \hat \phi$, and the function space 
\[
X := X_1 \times X_2 :=  C^{2+\beta}_{\text{even}}(\s) \times \Bigl\{ \hat \phi \in C_\text{per, even}^{2+\beta}(\overline{\hat \Omega}) \colon \hat \phi|_{s=1} = \hat \phi|_{s=0}=0 \Bigr\}.
\]
Furthermore, we define the target space 
\[
Y := Y_1\times Y_2 := C_\text{even}^{1 +\beta}(\s) \times  C_\text{per, even}^{\beta}(\overline{\hat \Omega}),
\]
and the sets
\[
\OO := \{ (\eta,\hat \phi) \in X \colon \min \eta > -1 \}
\]
and
\[
\UU := \{ (\mu, \alpha, \lambda) \in \R^3 \colon \mu \ne 0, \alpha < 0, \sin(\lambda)\ne 0 \},
\]
which conveniently captures Assumptions \ref{assumption 1} and \ref{assumption 2}.
Then $\OO \subset X$ is an open neighbourhood of the origin in $X$, and the embedding $X \hookrightarrow Y$ is compact. Elements of $Y$ will be written $w = (\eta,\hat \phi)$ and
elements of $\UU$ will be written $\Lambda=(\mu, \alpha, \lambda)$. 
Define the operator 
$\FF\colon \OO \times \UU \to Y$ by
\[
\FF(w, \Lambda) := (\FF_1(w,\Lambda), \FF_2(w,\Lambda))
\]
where  
\begin{align*}
\FF_1(w, \Lambda) &:=
\frac12\left[ 
\left(\hat \phi_x-\frac{s\eta_x((\psi_0)_s(s;\Lambda)+\hat\phi_s)}{1+\eta}\right)^2+\frac{((\psi_0)_s(s;\Lambda)+\hat \phi_s)^2}{(1+\eta)^2}\right]_{s=1}\\
&\quad +\eta-Q(\Lambda),
\end{align*}
and
\begin{align*}
\FF_2(w,\Lambda) &:=
\left(\hat \phi_{x}-\frac{s\eta_x((\psi_0)_s(s;\Lambda)+\hat\phi_s)}{1+\eta}\right)_x\\
&\quad -\frac{s\eta_x}{1+\eta}\left(\hat \phi_{x}-\frac{s\eta_x((\psi_0)_s(s;\Lambda)+\hat\phi_s)}{1+\eta}\right)_s\\
&\quad +\frac{(\psi_0)_{ss}(s;\Lambda)+\hat\phi_{ss}}{(1+\eta)^2} - \alpha \left( \psi_0(s; \Lambda) + \hat \phi \right).
\end{align*}
We write $C^\omega$ to denote analyticity between Banach spaces. 

\begin{lemma}[Equivalence]
\label{lemma:equivalence}
The problem $\FF((\eta,\hat \phi),\Lambda)= 0$, $(\eta,\hat\phi) \in \OO$,  is equivalent to the water-wave problem \eqref{eq:problem}. 
The map $(\eta,\hat\phi) \mapsto (\eta,\hat\psi)$ is continuously differentiable, $\FF((0,\hat \phi),\Lambda) = 0$ if and only if $\hat \phi = 0$, and $\FF \in C^{\omega}(\OO \times \UU, Y)$.
\end{lemma}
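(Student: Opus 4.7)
I would prove all four assertions by unpacking the definition of $\FF$ and invoking Lemma \ref{lemma:flattening} for the non-trivial content. The equivalence part comes almost for free: by construction $\FF_1$ reproduces the dynamic boundary condition in \eqref{eq:flatproblem} and $\FF_2$ reproduces the interior equation, once one substitutes $\hat\psi = \psi_0(\cdot;\Lambda) + \hat\phi$ and inserts the explicit value \eqref{eq:Q} of $Q(\Lambda)$. Because $\psi_0(0;\Lambda) = m_0$, $\psi_0(1;\Lambda) = m_1$, and $\hat\phi \in X_2$ vanishes on $\{s=0,1\}$, the two Dirichlet conditions in \eqref{eq:flatproblem} are automatic, so $\FF(w,\Lambda) = 0$ is equivalent to $\EE((\eta,\hat\psi),\alpha,Q(\Lambda)) = 0$; Lemma \ref{lemma:flattening} then delivers equivalence with \eqref{eq:problem}. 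Specializing $\eta = 0$ and applying the second half of Lemma \ref{lemma:flattening} to the resulting laminar problem forces $\psi_0 + \hat\phi = \psi_0$, i.e.\ $\hat\phi = 0$; the reverse implication is immediate.

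The map $(\eta,\hat\phi) \mapsto (\eta,\hat\psi) = (\eta,\psi_0(\cdot;\Lambda) + \hat\phi)$ is the identity on $X_1$ and an affine translation on $X_2$, hence real-analytic and in particular $C^1$. For $\FF \in C^\omega(\OO \times \UU, Y)$ I would combine four standard ingredients: (i) $\Lambda \mapsto \psi_0(\cdot;\Lambda)$, given by \eqref{eq:trivial}, is real-analytic from $\UU$ into $C^{2+\beta}([0,1])$, because $\theta_0 = \sqrt{-\alpha}$ is analytic on $(-\infty,0)$ and $\cos$ is entire; (ii) $\eta \mapsto (1+\eta)^{-1}$ is real-analytic from $\OO$ into $C^{2+\beta}(\s)$ via the Neumann series, which converges in the Banach algebra $C^{2+\beta}(\s)$ since $1+\eta > 0$ on $\OO$; (iii) pointwise multiplication $C^{k+\beta} \times C^{k+\beta} \to C^{k+\beta}$ is bounded and bilinear for $k \in \{0,1,2\}$; (iv) the trace at $s = 1$ and the partial derivatives $\partial_x,\partial_s$ are bounded linear with the expected loss of H\"older regularity. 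Since $\FF_1$ and $\FF_2$ are polynomial in $\hat\phi$, $\eta$, $(1+\eta)^{-1}$, $\psi_0$ and finitely many of their derivatives, composing these analytic building blocks gives $\FF \in C^\omega(\OO \times \UU, Y)$.

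The main obstacle, and the only genuinely non-mechanical step, is the regularity bookkeeping behind this last claim. One must check that the combinations appearing in $\FF_1$ land in $Y_1 = C^{1+\beta}(\s)$---which they do because the highest-order derivatives involved are $\eta_x$ and first derivatives of $\hat\phi$ evaluated on $\{s=1\}$, each of class $C^{1+\beta}$---and that the second-order expressions in $\FF_2$ land in $Y_2 = C^\beta(\overline{\hat\Omega})$. Once this count is verified term by term, the analyticity assertion follows from the standard composition and product rules for real-analytic maps between Banach spaces.
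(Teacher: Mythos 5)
Your argument is correct and takes essentially the same route as the paper's proof, which consists of noting that $\FF((\eta,\hat\phi),\Lambda)=\EE((\eta,\psi_0(s;\Lambda)+\hat\phi),\alpha,Q(\Lambda))$ and invoking Lemma~\ref{lemma:flattening}, and then observing that $\FF$ is polynomial in $\hat\phi$ and $\psi_0$, rational in $1+\eta$, with $\psi_0\in C^\omega$. Your version simply makes explicit the details (the automatic Dirichlet conditions, the Neumann series for $(1+\eta)^{-1}$, and the H\"older-regularity bookkeeping) that the paper leaves implicit.
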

\begin{proof}
Relying on Lemma~\ref{lemma:flattening} it suffices to note that $\FF((\eta,\hat \phi),\Lambda) = \EE((\eta, \psi_0(s;\Lambda)+\hat \phi),\alpha, Q(\Lambda))$. 
The analyticity follows from the fact that $\psi_0 \in C^\omega(\R \times \UU,\R)$, and that $\FF$ depends polynomially on $\hat \phi$ and $\psi_0$, 
whereas it is a rational function in $1+ \eta$.
\end{proof}

\subsubsection*{Linearization}
We obtain the linearized water-wave problem by taking the Fr\'echet derivative of $\FF$ at the point $w=0$:
\begin{align}
\label{eq:DF1}
\Diff_w \FF_{1}(0,\Lambda)w&=\left[(\psi_0)_s\, \hat\phi_s - (\psi_0)_s^2\, \eta + \eta\right]\Big|_{s=1},
\intertext{and} 
\label{eq:DF2}
\Diff_w \FF_{2}(0,\Lambda)w &= \hat \phi_{xx}+\hat \phi_{ss}-\alpha \hat \phi-s(\psi_0)_s\eta_{xx}-2(\psi_0)_{ss}\eta,
\end{align}
where $\Diff_w$ denotes the Frech{\'e}t derivative with respect to the first variable $w$.
It turns out that there is a simple transformation that transforms this linearization to the one which is obtained by formally linearizing directly in \eqref{eq:problem}. To that aim, let
\[
\tilde X_2 := \Bigl\{ \phi \in C_\text{per, even}^{2+\beta}(\overline{\hat \Omega}) \colon  \phi|_{s=0}=0 \Bigr\},
\]
and
\[
\tilde X := \left\{ (\eta,\hat \phi) \in X_1 \times \tilde X_2 \right\}.
\]
Then ${X_2 \hookrightarrow \tilde X_2}$ and $X \hookrightarrow \tilde X \hookrightarrow  Y$, where the last embedding is compact. 

\begin{proposition}[The $\TT$-isomorphism]
\label{prop:T}
The bounded, linear operator $ \TT(\Lambda) \colon \tilde X_2\to X$ given by
\[
 \TT(\Lambda) \phi := \left(-\frac{\phi|_{s=1}}{(\psi_0)_s(1)},\phi - \frac{s(\psi_0)_s\, \phi|_{s=1}}{(\psi_0)_s(1)}\right)
\]
is an isomorphism. Define $\LL(\Lambda) := \Diff_w \FF(0,\Lambda)  \TT(\Lambda)\colon \tilde X_2 \to Y$. Then
\begin{equation}
\label{eq:Lexpression}
\LL(\Lambda)\phi=\left(\, \left[(\psi_0)_s \phi_s- \left((\psi_0)_{ss}+\frac{1}{(\psi_0)_s}\right)\phi\right]\biggl|_{s=1}, \,\,  (\pa_x^2+\pa_s^2 -\alpha)\phi\, \right).
\end{equation}
\end{proposition}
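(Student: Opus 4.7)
The plan is two-fold: first exhibit an explicit inverse of $\TT(\Lambda)$, and then compute $\LL(\Lambda)\phi = \Diff_w \FF(0,\Lambda) \TT(\Lambda)\phi$ by direct substitution, exploiting the ODE satisfied by $\psi_0$.

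For the isomorphism claim, note that by Assumption~\ref{assumption 2} the value $(\psi_0)_s(1) = -\mu\theta_0\sin(\lambda)$ is nonzero, so $\TT(\Lambda)$ is a well-defined bounded linear map from $\tilde X_2$ into $C^{2+\beta}_{\text{even}}(\s)\times C^{2+\beta}_{\text{per,even}}(\overline{\hat\Omega})$. Its image actually lies in $X$: the trace onto $s=1$ has the required regularity, the value of the second component at $s=0$ vanishes since $\phi \in \tilde X_2$, and the value at $s=1$ vanishes by the built-in cancellation $\phi|_{s=1} - (\psi_0)_s(1)\cdot \phi|_{s=1}/(\psi_0)_s(1)=0$. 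The formula itself suggests the inverse: given $(\eta,\hat\phi)\in X$, I would set $\phi := \hat\phi - s(\psi_0)_s\eta$, which defines a bounded linear map $X \to \tilde X_2$ (the boundary condition $\phi|_{s=0}=0$ holds automatically since $\hat\phi|_{s=0}=0$); a one-line verification then confirms that this inverts $\TT(\Lambda)$ from both sides.

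For the expression \eqref{eq:Lexpression}, I would substitute $(\eta,\hat\phi) = \TT(\Lambda)\phi$ into \eqref{eq:DF1} and \eqref{eq:DF2}. Writing $c(x) := \phi(x,1)/(\psi_0)_s(1)$ for brevity, so that $\eta = -c$ and $\hat\phi = \phi - s(\psi_0)_s c$, the first component is purely a boundary evaluation: expanding $\hat\phi_s|_{s=1} = \phi_s|_{s=1} - ((\psi_0)_s(1) + (\psi_0)_{ss}(1))\,c$ and combining with $-(\psi_0)_s^2\eta + \eta$ at $s=1$ produces $\bigl[(\psi_0)_s\phi_s - ((\psi_0)_{ss} + 1/(\psi_0)_s)\phi\bigr]_{s=1}$ after the $(\psi_0)_s(1)c$ terms telescope. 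For the second component, I would expand $\hat\phi_{xx}$, $\hat\phi_{ss}$, $-\alpha\hat\phi$, $-s(\psi_0)_s\eta_{xx}$, and $-2(\psi_0)_{ss}\eta$ in terms of $\phi$ and $c$; the many resulting terms involving $c$, $c''$, $(\psi_0)_s c$, and $(\psi_0)_{ss} c$ then cancel pairwise, the cancellations depending crucially on the laminar identities $(\psi_0)_{ss} = \alpha \psi_0$ and hence $(\psi_0)_{sss} = \alpha (\psi_0)_s$, leaving precisely $(\pa_x^2 + \pa_s^2 - \alpha)\phi$.

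The only real obstacle is careful bookkeeping in the second component; there is no analytic subtlety, since $\psi_0$ is smooth and all the cancellations are algebraic consequences of the Helmholtz equation satisfied by the laminar profile.
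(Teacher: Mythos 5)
Your proposal is correct and follows essentially the same route as the paper: the inverse map $(\eta,\hat\phi)\mapsto\hat\phi-s(\psi_0)_s\eta$ you exhibit is exactly the substitution the paper uses, and the cancellations you describe in the second component are precisely the paper's identity $(\pa_x^2+\pa_s^2-\alpha)(s(\psi_0)_s\eta)=s(\psi_0)_s\eta_{xx}+2(\psi_0)_{ss}\eta$, which rests on $(\psi_0)_{sss}=\alpha(\psi_0)_s$ as you note. Your explicit verification that $\TT(\Lambda)$ maps into $X$ and is inverted from both sides is slightly more detailed than the paper's, but the substance is identical.
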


{\em Proof}. 
Note that we have the relation 
\[
(\pa_x^2+\pa_s^2-\alpha)(s(\psi_0)_s \, \eta) = s(\psi_0)_s \, \eta_{xx}+2(\psi_0)_{ss} \, \eta.
\]
Thus, the right hand side of equation \eqref{eq:DF2}
can be written
\[
(\pa_x^2+\pa_s^2 -\alpha)(\hat\phi-s(\psi_0)_s\, \eta) = (\pa_x^2+\pa_s^2 -\alpha)\phi,
\]
where $\phi=\hat\phi-s(\psi_0)_s\, \eta$.
Moreover, for $\hat\phi \in X_2$ we have that
\[
\begin{aligned}
\Diff_w \FF_{1}(0,\Lambda)w &= [(\psi_0)_s\, \phi_s + (\psi_0)_s(\psi_0)_{ss}\,\eta + \eta]|_{s=1},\\
\phi|_{s=1} &= -(\psi_0)_s(1)\, \eta,\\
\phi|_{s=0} &= 0.
\end{aligned}
\]
Due to Assumption~\ref{assumption 2}  we can thus express $\eta$ in terms of $\phi$ and obtain that
\[
\Diff_w \FF_{1}(0,\Lambda)w = \left[(\psi_0)_s\, \phi_s - \left((\psi_0)_{ss} + \frac{1}{(\psi_0)_s}\right)\phi\right]\biggl|_{s=1}.
\eqno \endproof
\]

\begin{rem}
\label{rem:notation}
When the dependence on the parameters is unimportant, we shall for convenience refer to $\Diff_w \FF(0,\Lambda)$, $\LL(\Lambda)$ and $\TT(\Lambda)$ simply as $\Diff_w \FF(0)$, $\LL$ and $\TT$.
\end{rem}

Via $\TT$, elements $\phi \in \tilde X_2$ can be ``lifted'' to elements $(\eta, \hat \phi) \in \tilde X$ using the correspondence induced by the first component of $\TT\phi$.
The following result is immediate.

\begin{proposition}[Surface projection]
\label{prop:P}
The mapping $\eta_{(\cdot)}$ defined by
\[
\tilde X_2 \ni \phi \: \stackrel{\eta_{(\cdot)}}{\mapsto} \: \eta_\phi = -\frac{\phi|_{s=1}}{(\psi_0)_s(1)} \in X_1,
\]
is linear and bounded, whence the linear map
\[
\left( \eta_{(\cdot)}, \id \right) \colon \tilde X_2 \ni \phi \: \mapsto \: (\eta_\phi,\phi) \in \tilde X
\]
is bounded.
\end{proposition}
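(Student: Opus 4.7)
The proposition is essentially a direct consequence of standard trace estimates combined with Assumption~\ref{assumption 2}, so my plan is to verify the two claims in turn with minimal fuss.

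First, I would address the map $\eta_{(\cdot)}$. Linearity is immediate from the defining formula, since $\phi \mapsto \phi|_{s=1}$ is linear and we simply divide by the fixed nonzero constant $(\psi_0)_s(1)$. For boundedness, I would invoke the continuity of the trace operator $\phi \mapsto \phi|_{s=1}$ from $C^{2+\beta}_{\text{per,even}}(\overline{\hat\Omega})$ into $C^{2+\beta}_{\text{even}}(\s) = X_1$; since restriction to $\{s=1\}$ does not disturb periodicity or evenness in $x$ and preserves H\"older regularity, this trace is a bounded linear map with norm at most $1$. Multiplying by the finite scalar $1/(\psi_0)_s(1)$, which is well-defined and finite thanks to Assumption~\ref{assumption 2} (recall that $(\psi_0)_s(1) = -\mu\theta_0\sin(\lambda) \neq 0$), preserves boundedness. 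Hence $\|\eta_\phi\|_{X_1} \leq |(\psi_0)_s(1)|^{-1}\,\|\phi\|_{\tilde X_2}$.

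For the second statement, I would simply note that $\tilde X = X_1 \times \tilde X_2$ carries the product norm, so $\|(\eta_\phi,\phi)\|_{\tilde X} = \|\eta_\phi\|_{X_1} + \|\phi\|_{\tilde X_2} \leq (1 + |(\psi_0)_s(1)|^{-1})\,\|\phi\|_{\tilde X_2}$. Linearity of the pair $(\eta_{(\cdot)},\id)$ is inherited from its two components, finishing the argument.

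There is no real obstacle here; the statement is almost by inspection. The only point one must not forget is that the constant $(\psi_0)_s(1)$ is nonzero, which is exactly the content of Assumption~\ref{assumption 2} and is also what made the transformation underlying Proposition~\ref{prop:T} well-defined in the first place. In that sense this proposition is the companion surface-projection statement to the $\TT$-isomorphism, recording that the first component of $\TT\phi$ depends boundedly and linearly on $\phi$ in its own right.
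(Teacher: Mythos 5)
Your proof is correct and matches the paper's intent exactly: the paper labels this result ``immediate'' and gives no proof, and your argument (linearity by inspection, boundedness of the trace $\phi\mapsto\phi|_{s=1}$ between H\"older spaces, division by the nonzero constant $(\psi_0)_s(1)$ guaranteed by Assumption~\ref{assumption 2}, and the product norm on $\tilde X$) is precisely the reasoning being suppressed.
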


\begin{figure}
\begin{center}
\includegraphics[width=0.4\linewidth]{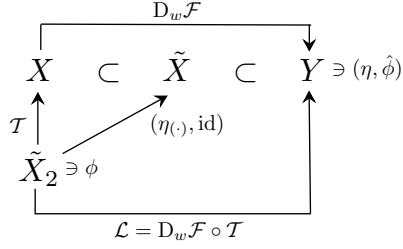}
\end{center}
\caption{\small The functional-analytic setting.}
\label{fig:diagram}
\end{figure}

We now have the functional-analytic setting outlined in Figure~\ref{fig:diagram}.
Any property of the operator $\Diff_w \FF(0)$ can be conveniently studied using the operator $\LL$. Some properties of the operator $\LL$ are recorded in the following lemma. The proof is standard and is therefore omitted (see e.g.\ \cite[Section III.1]{Kielhofer04} and \cite[Lemma 3.5 and Lemma 3.6]{Wahlen06b}).

\begin{lemma}[Fredholm properties]
\label{lemma:Lproperties}
The operator $\LL\colon \tilde X_2\to Y$ is a Fredholm operator of index $0$. Its kernel, 
$\ker \LL$,  
is spanned by a finite number of functions of the form 
\begin{equation}
\label{eq:phi_k}
\phi_k(x,s)=\begin{cases}
\cos(kx) \sin^*(\theta_k s)/\theta_k, & \theta_k\ne 0,\\
\cos(kx) s, & \theta_k = 0,
\end{cases}
\qquad k\in \kappa\Z,
\end{equation}
 where we have used the notation
\[
\sin^*(\theta_k s) := \begin{cases}\sin(\theta_k s), & k^2+\alpha<0,\\
\sinh(\theta_k s), & k^2+\alpha > 0.
\end{cases}
\]
Define $Z := \{(\eta_\phi,\phi)\colon \phi \in \ker \LL\} \subset \tilde X \subset Y$. Then the range of $\LL$, $\range \LL$, is the orthogonal complement of $Z$ in 
 $Y$ with respect to the inner product
\[
\langle w_1,w_2 \rangle_Y :=\iint_{\hat \Omega} \hat\phi_1 \hat\phi_2\, dx\, ds+\int_{-\pi}^{\pi} \eta_1\eta_2\, dx, \qquad 
w_1,w_2\in Y.
\]
The projection $\Pi_Z$ onto $Z$ along $\range \LL$ is given by 
\[
\Pi_Z w= \sum \frac{\langle w, \tilde w_k\rangle_Y}{\|\tilde w_k\|_Y^2} \tilde w_k,
\]
where the sum ranges over all $\tilde w_k=(\eta_{\phi_k},\phi_k)\in Z$, with $\phi_k$ of the form 
\eqref{eq:phi_k}.
\end{lemma}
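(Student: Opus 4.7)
The natural approach is a Fourier decomposition in the periodic variable $x$, reducing the problem to a family of two-point boundary value problems in $s$. Expanding $\phi(x,s) = \sum_{k\in\kappa\Z_{\geq 0}} a_k(s)\cos(kx)$ decouples $\LL$ mode by mode: the PDE component becomes $a_k''(s) - (k^2+\alpha)a_k(s) = f_k(s)$ on $[0,1]$, with Dirichlet condition $a_k(0) = 0$ inherited from $\tilde X_2$ and a Robin-type condition at $s=1$ coming from the first component of $\LL$. In the homogeneous case the ODE solutions satisfying $a_k(0)=0$ are multiples of the functions in \eqref{eq:phi_k}; substituting them into the $s=1$ condition yields a scalar dispersion relation in $k$. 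Since $\theta_k\sim k$ as $k\to\infty$ and $(\psi_0)_s(1)\ne 0$ by Assumption~\ref{assumption 2}, the leading behaviour of the dispersion relation is $(\psi_0)_s(1)\cosh\theta_k$, which forces only finitely many roots and gives the claimed kernel.

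For the range, the inclusion $\range \LL \subseteq Z^\perp$ follows from Green's identity. Given $\phi \in \tilde X_2$ with $\LL\phi=(g,f)$ and $\phi_k \in \ker\LL$, integrating by parts twice over $\hat\Omega$ and using periodicity in $x$ together with $\phi|_{s=0}=\phi_k|_{s=0}=0$ reduces $\iint f\phi_k\,dx\,ds$ to the boundary integral $\int [\phi_k\phi_s-\phi_{k,s}\phi]_{s=1}\,dx$. Solving the identity $(\LL\phi)_1 = g$ for $\phi_s|_{s=1}$ and using the corresponding homogeneous condition satisfied by $\phi_k$, the coefficient of $\phi|_{s=1}$ cancels exactly, leaving $\int \phi_k|_{s=1}\,g/(\psi_0)_s(1)\,dx = -\int \eta_{\phi_k}g\,dx$, i.e.\ $\langle \LL\phi,\tilde w_k\rangle_Y=0$. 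The reverse inclusion is proved mode by mode: for non-resonant $k$ the scalar ODE boundary value problem is uniquely solvable by an explicit Green's function, while for the finitely many resonant $k$ the Fredholm alternative on the ODE gives solvability precisely under the orthogonality condition against $\tilde w_k$ that is our hypothesis.

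The Fredholm statement follows at once from $\dim\ker\LL = \dim Z = \codim\range\LL<\infty$, and the index is zero. Since the $\tilde w_k$ correspond to distinct Fourier modes in $x$, they are pairwise orthogonal with respect to $\langle \cdot,\cdot\rangle_Y$, so the decomposition $Y = Z \oplus \range\LL$ together with the displayed formula for $\Pi_Z$ is just the standard orthogonal expansion. I expect the main obstacle to be in verifying that the formal mode-by-mode construction of the second paragraph assembles into an element of the H\"older space $\tilde X_2$; this regularity question for the mixed Dirichlet--Robin problem on the cylinder $\hat\Omega$ is handled by Schauder theory (with periodicity in $x$ treated as a boundary condition on a fundamental domain) and is precisely the standard argument the authors defer to the cited references \cite{Kielhofer04,Wahlen06b}.
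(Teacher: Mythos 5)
Your proposal is correct and is essentially the standard argument that the paper itself omits and delegates to \cite{Kielhofer04} and \cite{Wahlen06b}: Fourier decomposition in $x$ reducing $\LL$ to Sturm--Liouville problems with a Dirichlet condition at $s=0$ and a Robin condition at $s=1$, Green's identity for $\range\LL\subseteq Z^\perp$ (your cancellation of the $\phi|_{s=1}$ terms leaving $-\int\eta_{\phi_k}g\,dx$ is exactly right), the Fredholm alternative mode by mode for the reverse inclusion, and Schauder theory to recover $C^{2+\beta}$ regularity. Nothing further is needed.
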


The abbreviation $\sin^*$ in Lemma~\ref{lemma:Lproperties} will be used analogously for other trigonometric and hyperbolic functions. Noting that $\range \Diff_w \FF(0)=\range \LL$ and that
$\ker \Diff_w \FF(0)= \TT\ker \LL$ we obtain the following corollary 
of Lemma \ref{lemma:Lproperties}.

\begin{corollary}
\label{corollary:Fredholm}
$\Diff_w \FF(0)\colon X \to Y$ is a Fredholm operator of index $0$. 
\end{corollary}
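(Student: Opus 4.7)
The plan is to exploit the factorization $\LL(\Lambda) = \Diff_w \FF(0,\Lambda)\circ \TT(\Lambda)$ given in Proposition~\ref{prop:T}, together with the fact that $\TT(\Lambda)\colon \tilde X_2 \to X$ is a bounded linear isomorphism. Since $\LL$ is Fredholm of index $0$ by Lemma~\ref{lemma:Lproperties}, composing with a linear isomorphism on the right preserves both the dimension of the kernel and the codimension of the range, so the index is transported unchanged to $\Diff_w \FF(0)$.

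Concretely, I would verify the two identities already noted in the paragraph preceding the corollary. For the range: if $y = \LL\phi$ for some $\phi \in \tilde X_2$, then setting $w = \TT\phi \in X$ gives $\Diff_w \FF(0) w = y$, so $\range \LL \subseteq \range \Diff_w \FF(0)$; conversely, surjectivity of $\TT$ onto $X$ lets us write any $w \in X$ as $\TT\phi$ and then $\Diff_w\FF(0)w = \LL\phi \in \range \LL$. Thus $\range \Diff_w \FF(0) = \range \LL$, and in particular
\[
\codim \range \Diff_w \FF(0) = \codim \range \LL < \infty.
\]
For the kernel: $\Diff_w \FF(0) w = 0$ iff $\LL(\TT^{-1}w) = 0$, so $\ker \Diff_w \FF(0) = \TT(\ker \LL)$, and since $\TT$ is a linear isomorphism this is a finite-dimensional subspace of $X$ with
\[
\dim \ker \Diff_w \FF(0) = \dim \ker \LL.
\]

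Combining these, $\Diff_w \FF(0)$ has finite-dimensional kernel, closed range of finite codimension (closedness is automatic from finite codimension in a Banach space, or follows directly from closedness of $\range \LL$ since the two ranges coincide as subsets of $Y$), and
\[
\dim \ker \Diff_w \FF(0) - \codim \range \Diff_w \FF(0) = \dim \ker \LL - \codim \range \LL = 0.
\]
I do not expect any real obstacle here; the statement is essentially a bookkeeping corollary of Proposition~\ref{prop:T} and Lemma~\ref{lemma:Lproperties}, and the only thing worth being careful about is that $\TT$ is asserted to be an isomorphism onto the full space $X$ (not merely into), which is exactly what Proposition~\ref{prop:T} provides.
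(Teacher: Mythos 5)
Your proposal is correct and is precisely the paper's argument: the authors simply note that $\range \Diff_w \FF(0)=\range \LL$ and $\ker \Diff_w \FF(0)= \TT\ker \LL$ (using that $\TT$ is an isomorphism from Proposition~\ref{prop:T}) and then invoke the Fredholm property of $\LL$ from Lemma~\ref{lemma:Lproperties}. You have merely written out the bookkeeping that the paper leaves implicit.
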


\begin{rem}
\label{rem:Assumption 2}
If Assumption~\ref{assumption 2} is violated, that is if $(\psi_0)_s(1)=0$, then 
$\Diff_w \FF_1(0)w=\eta$. Consequently, the range of $\Diff_w \FF_1(0)$
equals $X_1$, and this can be used to show that $\Diff_w \FF(0)$ is not Fredholm. 
\end{rem}

\section{Bifurcation}\label{sec:bifurcation}
This section contains the main proofs of the one- and two-dimensional bifurcation.

\begin{lemma}[Bifurcation condition]
\label{lemma:Bifurcation condition}
Let $\Lambda=(\mu, \alpha,\lambda)\in \UU$, so that that Assumptions~\ref{assumption 1}--\ref{assumption 2} hold.
For $k\in \kappa\Z$ we have that $\cos(kx) \sin^*(\theta_k s)/\theta_k\in \ker \LL$ if and only if 
\begin{equation}
\label{eq:bifurcation2}
\theta_k \cot^*(\theta_k) = \frac{1}{\mu^2 \theta_0^2 \sin^2(\lambda)} + \theta_0 \cot(\lambda).
\end{equation}
\end{lemma}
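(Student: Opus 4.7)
The plan is a direct verification. By Proposition~\ref{prop:T}, $\phi \in \ker\LL$ precisely when
\begin{enumerate}
\item $(\partial_x^2+\partial_s^2-\alpha)\phi = 0$ in $\hat\Omega$,
\item $\phi|_{s=0}=0$ (built into $\tilde X_2$), and
\item $\bigl[(\psi_0)_s\phi_s-\bigl((\psi_0)_{ss}+1/(\psi_0)_s\bigr)\phi\bigr]_{s=1}=0$.
\end{enumerate}
So I would substitute the candidate $\phi_k(x,s)=\cos(kx)\sin^*(\theta_k s)/\theta_k$ into these three relations and see that (i)--(ii) are automatic, while (iii) reduces exactly to \eqref{eq:bifurcation2}.

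For (i), I would compute $\partial_x^2\phi_k = -k^2\phi_k$ and, using $(\sin^*)''(\theta_ks) = \mp\theta_k^2\sin^*(\theta_ks)$ with the sign determined by whether $k^2+\alpha<0$ or $>0$, observe that in both cases $\partial_s^2\phi_k=-(k^2+\alpha)\phi_k$, so that $(\partial_x^2+\partial_s^2-\alpha)\phi_k=0$. Condition (ii) holds because $\sin^*(0)=0$. For the boundary condition (iii), I would use the explicit formula \eqref{eq:trivial}, yielding $(\psi_0)_s(1)=-\mu\theta_0\sin(\lambda)$ and $(\psi_0)_{ss}(1)=-\mu\theta_0^2\cos(\lambda)$, both nonzero by Assumption~\ref{assumption 2}. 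Evaluating $\phi_k$ and $(\phi_k)_s$ at $s=1$, dividing (iii) through by $\cos(kx)(\psi_0)_s(1)$, and rearranging gives
\[
\cos^*(\theta_k)-\left(\frac{(\psi_0)_{ss}(1)}{(\psi_0)_s(1)}+\frac{1}{(\psi_0)_s(1)^2}\right)\frac{\sin^*(\theta_k)}{\theta_k}=0,
\]
and the quantities in parentheses simplify to $\theta_0\cot(\lambda)$ and $1/(\mu^2\theta_0^2\sin^2(\lambda))$ respectively. Multiplying through by $\theta_k/\sin^*(\theta_k)$ yields precisely \eqref{eq:bifurcation2}.

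There is nothing really hard here; the only points that require care are the case distinctions packaged into $\sin^*$ and $\cot^*$, and the degenerate case $\theta_k=0$ (when $\alpha+k^2=0$, so $\phi_k=\cos(kx)s$). The former is handled uniformly since both cases produce the same algebra up to a sign already absorbed in the definitions. For the latter I would verify separately that $\phi_k=\cos(kx)s$ satisfies (i)--(ii) and that (iii) collapses to $1=\theta_0\cot(\lambda)+1/(\mu^2\theta_0^2\sin^2(\lambda))$, which agrees with \eqref{eq:bifurcation2} under the natural convention $\theta\cot^*(\theta)\to 1$ as $\theta\to 0$. Finally, I would remark that $\sin^*(\theta_k)\neq 0$ in the relevant cases (automatic for $\sinh$, and a necessary side condition in the $\sin$ case, where $\sin^*(\theta_k)=0$ would force $\cos^*(\theta_k)=0$ in (iii) as well, which is impossible), so that dividing by $\sin^*(\theta_k)$ is legitimate whenever $\phi_k\in\ker\LL$.
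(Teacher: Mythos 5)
Your proposal is correct and takes essentially the same route as the paper: direct substitution of $\phi_k$ into \eqref{eq:Lexpression}, the same observation that $\sin^*(\theta_k)\ne 0$ must hold (else $\cos^*(\theta_k)\ne 0$ would force $(\psi_0)_s(1)=0$, contradicting Assumption~\ref{assumption 2}) before dividing, and the same separate treatment of $\theta_k=0$. Two cosmetic slips that do not affect the argument: $\partial_s^2\phi_k=(k^2+\alpha)\phi_k$, not $-(k^2+\alpha)\phi_k$ (your stated conclusion $(\partial_x^2+\partial_s^2-\alpha)\phi_k=0$ is nevertheless the right one), and Assumption~\ref{assumption 2} guarantees only $(\psi_0)_s(1)\ne 0$, not $(\psi_0)_{ss}(1)=-\mu\theta_0^2\cos(\lambda)\ne 0$ --- but you never actually need the latter, since you divide only by $(\psi_0)_s(1)$.
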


\begin{rem}
When $\theta_k=0$ we interpret $\sin^*(\theta_k s)/\theta_k$ as $s$ and $\theta_k \cot^*(\theta_k)$ as $1$. We shall use this convention from now on.
\end{rem}

\begin{proof}
Assume first that $\theta_k\ne 0$.
By substituting $\phi= \cos(kx) \sin^*(\theta_k s)/\theta_k$ into \eqref{eq:Lexpression}, we find that $\LL\phi=0$ if and only if
\begin{equation}
\label{eq:bifurcation1}
\theta_0 \mu\sin(\lambda)\cos^*(\theta_k)=\left(\theta_0^2\mu \cos(\lambda)+\frac1{\mu\theta_0\sin(\lambda)}\right)\frac{\sin^*(\theta_k)}{\theta_k}.
\end{equation}
We claim that under Assumption \ref{assumption 2} this equality implies that $\sin^*(\theta_k)\ne 0$. 
Indeed, if $\sin^*(\theta_k)=0$, then $\cos^*(\theta_k)\ne 0$. Examining the left hand side of equation \eqref{eq:bifurcation1} we find  that $\theta_0\mu\sin(\lambda)=0$. But this contradicts Assumption \ref{assumption 2}. Dividing by $\theta_0 \mu\sin(\lambda) \sin^*(\theta_k)/\theta_k$ in \eqref{eq:bifurcation1} we obtain \eqref{eq:bifurcation2}.
When $\theta_k=0$ we substitute $\phi=\cos(kx)s$ into \eqref{eq:Lexpression} and similarly obtain 
\eqref{eq:bifurcation2} with left hand side $1$.
\end{proof}

Note that since we have restricted ourselves to even functions, it suffices to take $k\ge 0$. In order to find nontrivial waves we assume that $k>0$.

The bifurcation equation \eqref{eq:bifurcation2} is a transcendental equation involving the three parameters $\mu$, $\alpha$, $\lambda$ and the variable $k$. 
For fixed values of $\mu$, $\alpha$ and $\lambda$, the number of different solutions $k\in \kappa\Z_+$ gives the dimension of $\ker \FF_w(0, \Lambda)$. 
A complete description of the set of solutions and its dependence on the parameters is at the moment out of our reach. 
The next lemma shows, however, that one can choose the parameters such that there is precisely one solution $k\in \kappa\Z_+$.
We also show that the parameters can be chosen so that there are precisely two solutions $k_1, k_2\in \kappa\Z_+$. 
It is an open question if there are values of the parameters for which there are three or more solutions, that is, for which the kernel of $\FF_w(0, \Lambda)$ is at least three-dimensional. 
Note that this question only depends on the left hand side of \eqref{eq:bifurcation2}. 
For example, if for some $a\in \R$ and $\alpha<0$ there are exactly three solutions to the equation $\theta_k\cot^*(\theta_k)=a$, we can always adjust $\lambda$ and $\mu$ to make the right 
hand side equal to $a$ (this is clear from the proof of Lemma~\ref{lemma:kernels}) and thus obtain an example of a three-dimensional kernel. The fact that the function $k\mapsto \theta_k\cot^*(\theta_k)$ is 
not monotone when $\alpha <0$ is mainly what makes the analysis of the bifurcation equation difficult. On the other hand, it is exactly this property that allows for multi-dimensional kernels.

\begin{lemma}[Bifurcation kernels]
\label{lemma:kernels}
Let $k_1, k_2 \in \kappa\Z_+$ and $\Lambda \in \UU$.  
\begin{itemize}
\item[i)]  For $a.e.$ $\alpha$ and any $k_1$  there are $\mu$ and $\lambda$ such that \eqref{eq:bifurcation2} holds only for $k = k_1$.
\item[ii)] For any $\lambda$ with $\cot(\lambda)\le 0$ and $k_1,k_2$ such that $k_2^2 \ge k_1^2 + \frac{9}{4}\pi^2$, there are $\alpha \in (-k_2^2,-k_1^2 - \pi^2)$  and $\mu$ such that \eqref{eq:bifurcation2} holds for $k =k_1, k_2$, and for no other $k > k_1$.
\item[iii)] For any $k_1,k_2$ such that $k_2^2 > k_1^2 + 3\pi^2$ and $k_2^2-k_1^2\ne (2n+1)\pi^2$, for all $n\in \Z$, there are $\alpha < - k_2^2$, $\mu$ and $\lambda$ such that \eqref{eq:bifurcation2} holds at least for $k = k_1, k_2$.
\end{itemize}
The points $\alpha$ such that there exist $\mu$ and $\lambda$ for which \eqref{eq:bifurcation2} holds for more than one $k \in \kappa\Z_+$ are isolated and thus a Lebesgue null set. 
\end{lemma}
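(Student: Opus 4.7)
Set $f(k,\alpha) := \theta_k \cot^*(\theta_k)$ and $R(\mu,\lambda,\alpha) := 1/(\mu^2\theta_0^2\sin^2\lambda) + \theta_0\cot\lambda$, so that \eqref{eq:bifurcation2} reads $f(k,\alpha) = R(\mu,\lambda,\alpha)$. For fixed $\alpha$, varying $\mu \in \R\setminus\{0\}$ lets $R$ sweep the ray $(\theta_0\cot\lambda,+\infty)$, and varying $\lambda$ over $(0,\pi)$ makes $\theta_0\cot\lambda$ cover all of $\R$; hence any prescribed real value can be realised by $R$. For (i), fix $k_1$. For each $k\in\kappa\Z_+\setminus\{k_1\}$ the function $\alpha\mapsto f(k,\alpha) - f(k_1,\alpha)$ is real-analytic off its discrete pole set and not identically zero, since $f(k_1,\cdot)$ blows up at $\alpha = -k_1^2 - n^2\pi^2$ while $f(k,\cdot)$ remains bounded near all but finitely many of those points. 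Its zero set is therefore discrete; the countable union over $k$ is Lebesgue null. On the complement one sets $c := f(k_1,\alpha)$ and selects $(\mu,\lambda)$ accordingly.

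For (ii) I restrict $\alpha$ to $I := (-k_1^2 - \tfrac{9}{4}\pi^2,\, -k_1^2 - \pi^2)\subset (-k_2^2,-k_1^2-\pi^2)$, the first inclusion using $k_2^2 \ge k_1^2 + \tfrac{9}{4}\pi^2$. On $I$, $\theta_{k_1}\in(\pi,\tfrac{3}{2}\pi)$, so $f(k_1,\alpha) = \theta_{k_1}\cot\theta_{k_1}$ is positive and continuous, tending to $+\infty$ at the right endpoint and to $0^+$ at the left, while $\theta_{k_2}^2 = \alpha + k_2^2\ge 0$ yields $f(k_2,\alpha) = \theta_{k_2}\coth\theta_{k_2}\in[1,C]$, bounded. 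The intermediate value theorem supplies $\alpha^\ast\in I$ with $c := f(k_1,\alpha^\ast) = f(k_2,\alpha^\ast) > 1$. For $k>k_2$, strict monotonicity of $t\mapsto t\coth t$ gives $f(k,\alpha^\ast) > c$; for $k_1<k<k_2$ the same monotonicity handles the $\coth$ subcase, and in the $\cot$ subcase $\theta_k\in(0,\theta_{k_1})\subset(0,\tfrac{3}{2}\pi)$, where the subintervals $(0,\pi/2)$ and $(\pi/2,\pi)$ are ruled out by $c>1$ and $\cot\theta_k<0$, respectively. Only the bad range $\theta_k\in(\pi,\tfrac{3}{2}\pi)$ survives, and this concerns finitely many $k$; each imposes an additional analytic constraint on $\alpha^\ast$, avoided by selecting an appropriate zero of $g := f(k_1)-f(k_2)$ in $I$. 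The hypothesis $\cot\lambda\le 0$ gives $c - \theta_0\cot\lambda>0$, so the corresponding $\mu$ exists.

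For (iii), any $\alpha<-k_2^2$ puts both $k_1,k_2$ in the $\cot$ regime with $\theta_{k_1}^2 - \theta_{k_2}^2 = \delta := k_2^2-k_1^2>3\pi^2$. I examine $g(\alpha) := f(k_1,\alpha) - f(k_2,\alpha)$ as $\alpha$ decreases from $-k_2^2$; both $\theta_{k_i}$ grow, and each $f(k_i,\cdot)$ has poles at $\theta_{k_i}\in\pi\N$. Simultaneous poles $\theta_{k_1}=m_1\pi$, $\theta_{k_2}=m_2\pi$ require $\delta = (m_1-m_2)(m_1+m_2)\pi^2$, and the subfamily $m_1=m_2+1$ produces exactly the excluded values $\delta = (2n+1)\pi^2$; ruling these out guarantees that near the first accessible pole of $f(k_1,\cdot)$ in $(-\infty,-k_2^2)$ the function $f(k_2,\cdot)$ remains finite. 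Across such a pole $g$ ranges from $-\infty$ to $+\infty$ (or vice versa), and IVT produces $\alpha^\ast<-k_2^2$ with $c := f(k_1,\alpha^\ast) = f(k_2,\alpha^\ast)$. Since $\lambda$ is free here, any value of $c$ is admissible, and $(\mu,\lambda)$ are chosen so that $R = c$. The final assertion of the lemma follows from the same analyticity argument applied to all pairs $k\neq k'$ in $\kappa\Z_+$, combined with the fact that only finitely many $k$ can have $\theta_k$ in any bounded set, so the total exceptional set in $(-\infty,0)$ is locally finite and hence isolated.

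The main obstacle is the uniqueness clause in (ii): ensuring that at least one zero $\alpha^\ast$ of $g$ in $I$ lies outside the finitely many analytic constraints coming from intermediate modes $k\in(k_1,k_2)\cap\kappa\Z_+$ whose $\theta_k(\alpha^\ast)$ falls in $(\pi,\tfrac{3}{2}\pi)$. Since $f(k_1,\cdot)$ and $f(k_2,\cdot)$ are both monotone increasing in $\alpha$, $g$ is not \emph{a priori} monotone, and one needs either to exploit the presence of several zeros of $g$ in $I$ to perturb around the bad configurations, or to refine the geometry so that no such intermediate $k$ arises --- this is the step I would scrutinise most carefully.
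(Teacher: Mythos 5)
Your overall strategy---analysing the level sets of $k\mapsto \theta_k\cot^*(\theta_k)$ and then realising any admissible value of the right-hand side of \eqref{eq:bifurcation2} by choosing $(\mu,\lambda)$---is exactly the paper's, and part (i) together with the final isolation claim is essentially correct as you argue it (the paper computes the derivative of the difference explicitly rather than invoking real-analyticity, but the conclusion is the same). The obstacle you flag in (ii) is, however, not a real one, and the fix is already in your toolbox: if $k_1<k<k_2$ lies in the oscillatory regime with $\theta_k\in(\pi,\tfrac{3}{2}\pi)$, then $\theta_k<\theta_{k_1}$ and both arguments lie on the \emph{same} branch of $x\mapsto x\cot x$, which is strictly decreasing there; hence $\theta_k\cot(\theta_k)>\theta_{k_1}\cot(\theta_{k_1})=c$ strictly, and no equality can occur. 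No further constraint on $\alpha^\ast$, no perturbation among several zeros of $g$, and no exclusion of intermediate modes is needed.

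Part (iii) contains a genuine gap. First, ``across such a pole $g$ ranges from $-\infty$ to $+\infty$'' does not allow an application of the intermediate value theorem: the two infinite limits sit on opposite sides of a discontinuity of $g$, and on either side separately $g$ may return to the same infinity before the next zero is forced (this is exactly what happens when a pole of $f(k_2,\cdot)$ intervenes), so no zero is produced. Second, the hypothesis $k_2^2-k_1^2\neq(2n+1)\pi^2$ does not rule out coincident poles: with $\delta=k_2^2-k_1^2$, a coincidence $\delta=(m_1-m_2)(m_1+m_2)\pi^2$ with $m_1-m_2\geq 2$ is not excluded (for instance $\delta=8\pi^2$ makes the first pole of $f(k_1,\cdot)$ beyond $-k_2^2$ coincide with the first pole of $f(k_2,\cdot)$). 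The actual role of the hypothesis is different: since $\delta>3\pi^2$ and $\delta$ is not an odd multiple of $\pi^2$, there is $n\geq 1$ with $(2n+1)\pi^2<\delta<(2n+3)\pi^2$, which is equivalent to
\[
k_1^2+(n+1)^2\pi^2<k_2^2+n^2\pi^2<k_2^2+(n+1)^2\pi^2<k_1^2+(n+2)^2\pi^2,
\]
that is, a full branch of $f(k_2,\cdot)$, on which it sweeps all of $\R$, is strictly contained in a single branch of $f(k_1,\cdot)$, on which the latter is continuous and bounded. On that interval $g$ is continuous and tends to $-\infty$ at one endpoint and $+\infty$ at the other, so the intermediate value theorem applies. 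This nesting argument is the missing step you need to supply.
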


\begin{rem}\label{rem:bifurcation}
For $\alpha$ as given by~ii) and~iii) in Lemma~\ref{lemma:kernels}, the background laminar flow~\eqref{eq:trivial} and its derivative 
$(\psi_0)_y(y;\Lambda)$ has at least one zero (stagnation occurs). For $\alpha$ as in Lemma~\ref{lemma:kernels}~i), 
the presence of a stagnation point for some $y \in (0,1)$ is guaranteed if $\alpha < -\pi^2$. 

Note that in the cases ii) and iii) we generally only assert that the kernel is {\em at least} two-dimensional. However, if we choose $k_1=\kappa$ in ii) we obtain that the kernel is 
{\em precisely} two-dimensional. In Example \ref{ex:sin-sinh} it is shown that one can also obtain a two-dimensional kernel with $k_1>\kappa$.
\end{rem}

\begin{proof}
The right hand side of \eqref{eq:bifurcation2}, call it $r(\mu)$, satisfies 
\[
\range_{\mu \ne 0} (r(\mu)) = (\theta_0 \cot(\lambda),\infty) \supseteq \R_+,
\]
if we choose $\lambda$ such that $\cot(\lambda)\le 0$.
We set $t := |\alpha|$, and study the function
\begin{equation*}
h(t;k) :=
\begin{cases}
\begin{aligned}
h_1(t;k) &= \sqrt{t - k^2} \cot\left( \sqrt{t - k^2} \right), \qquad &t > k^2 > 0,\\
h_2(t;k) &= \sqrt{k^2 - t} \coth\left( \sqrt{k^2 - t} \right), \qquad &k^2 \geq t > 0.
\end{aligned}
\end{cases}
\end{equation*}
The functions $h_1$ and $h_2$ both tend to $1$ as $t-k^2$ vanishes, whence the function $h$ is continuous away from the singularities $t-k^2 = n^2 \pi^2$, $n \in \Z_+$.  

Now, let $x := \sqrt{|t-k^2|}$. Since
\begin{align*}
\frac{d}{dx}\, ( x \coth(x) ) &=  \frac{\cosh(x) \sinh(x) - x}{\sinh^2(x)} > 0, \qquad &0 <\; &x \in \R,\\
\frac{d}{dx}\, ( x \cot(x) )  &=  \frac{\cos(x) \sin(x) - x}{\sin^2(x)} < 0, \qquad  &0 <\; &x \not\in \pi\Z,
\end{align*}
it follows that $h$ is strictly increasing in $k$, as well as strictly decreasing in $t$ (which, of course, does not apply across the singularities). 
Moreover, if we let $\bar n$ be the maximal integer $n\ge 0$ such that $n^2\pi^2 < t$, then
\[
\range_{t-k^2 < t} (h) = \begin{cases}\left(\sqrt{t} \cot \sqrt{t}, \infty \right), & t< \pi^2,\\ \R, & t=\pi^2,\end{cases}
\]
for $\bar n=0$, and 
\begin{align*}
\range_{{\bar n}^2 \pi^2 < t-k^2 < t} (h) &= \begin{cases}\left(\sqrt{t} \cot \sqrt{t}, \infty \right), & t< (\bar n +1)^2 \pi^2,\\ \R, & t=(\bar n +1)^2 \pi^2,\end{cases}\\
\range_{n^2 \pi^2 < t-k^2 < (n+1)^2 \pi^2} (h) &= \R, \qquad n = 1,2, \ldots, \bar n -1,\\
\range_{t-k^2 < \pi^2} (h) &= \R
\end{align*}
otherwise.

To prove ii), consider the set $I$ of $|\alpha| = t$ such that
\begin{equation}\label{eq:t-condition}
\pi^2 < t-k_1^2 < \left(\frac{3 \pi}{2}\right)^2 \quad\text{ and }\quad \range_{t\in I} (h(t;k_1)) = (1,\infty).
\end{equation}
Due to the monotonicity with respect to $k$, the only possible $k > k_1$ such that $h(t;k_1) = h(t;k)$ must belong to the interval $k^2 > t$, i.e. to the hyperbolic regime, and there exists at most one such $k$ for a given $t$. 
In particular, if $t$ fulfills \eqref{eq:t-condition} and $k_2^2 \ge k_1^2 + \frac{9}{4}\pi^2$, then $k_2^2 - t > 0$. Since, in that case,
\begin{equation}\label{eq:aneq1}
1 < h(t;k_2) < k_2 \coth(k_2),
\end{equation}
while $h(t;k_1)$ spans $(1,\infty)$ as $t$ spans $I$, we may choose $|\alpha| = t$ such that $h_1(t;k_1) = h_2(t;k_2)$. We then choose $\mu$ such that $h(t;k_1) = r(\mu)$.

We now prove that the points $\alpha$ for which \eqref{eq:bifurcation2} holds simultaneously for at least two different $k$'s are isolated. First say that 
\begin{equation}\label{eq:f}
f(t) := h_1(t;k_1) - h_1(t;k_2), \qquad k_1 < k_2,
\end{equation}
satisfies $f(t_0) = 0$, for $t_0 > k_j^2$, $j = 1,2$. Differentiation with respect to $t$ yields that
\begin{align*}
f^\prime(t) &=  \frac {\cot \left( \sqrt {t-k_1^2} \right) }{2 \sqrt {t-{k
_{{1}}}^{2}}}  - {\frac {\cot \left( \sqrt {t-k_2^2} \right) }{2 \sqrt {t-{k_{{2}}}^{2}}}} \\
&\quad + \frac{1}{2} \cot^2 \left( \sqrt {t-{k_{{2}}}^{2}} \right) - \frac{1}{2} \cot^2 \left( \sqrt {t-{k_{{1}}}^{2}} \right),
\end{align*}
which, using the fact that $f(t_0) = 0$, at $t = t_0$ can be rewritten as
\[
f^\prime(t_0) = \frac{ \left(k_2^2 - k_1^2 \right) h_1(t_0;k_1) \left( h_1(t_0;k_1) - 1\right)}{2 \left(t_0-k_2^2 \right) \left( t_0-k_1^2 \right)}. 
\]
Since $h_1(\cdot; k)$ is locally strictly decreasing, there exists $\varepsilon > 0$ with
\[
f(t) \neq 0, \qquad 0 < |t - t_0| < \varepsilon. 
\]
A similar, although not identical, calculation for $h_1(t;k_1)$ and $h_2(t;k_2)$ yields the same conclusion when $k_1^2 < t_0 < k_2^2$. When $k_1^2 < k_2^2=t_0$ a direct computation 
shows that $f'(t_0)=-1/6$, so that the conclusion remains true (here we have extended the definition of $f(t)$ in the natural way). Since, in view of the monotonicity of $h$ for $k^2 - t > 0$, there are finitely many $k_j$'s for which one could have equality with $k_1$ in \eqref{eq:f}. Therefore
\[
\inf_{k_j \in \kappa\Z_+} \left\{ \left| h(t_0;k_j) - h(t_0;k_1) \right| \colon h(t_0;k_j) \neq h(t_0;k_1) \right\} \geq \delta > 0,
\]
and a possibly smaller $\varepsilon$ exists such that $h(t;k_i) \neq h(t;k_j)$ for $|t - t_0| \in (0, \varepsilon)$,  $i \neq j$. Hence, the points $t$ for which $f(t) = 0$, for some $k_i \neq k_j$, are isolated.

This argument also proves i): For $a.e.$ $t = |\alpha|$, we have that $h(t;k_1) \neq h(t;k_j)$, $k_j \neq k_1$.We then pick $\lambda$ such that $\theta_0 \cot(\lambda) < h(t;k_1)$ and the appropriate $\mu$ such that $h(t;k_1) = r(\mu)$.

iii) Since we are interested in $\alpha < -k_2^2$, we study $t > k_2^2$. 
By the assumptions there exists $n\in \Z_+$ such that
\[
(2n+1)\pi^2<k_2^2-k_1^2<(2n+3)\pi^2.
\]
This implies that 
\[
k_1^2+(n+1)^2\pi^2<k_2^2+n^2\pi^2<k_2^2+(n+1)^2\pi^2<k_1^2+(n+2)^2\pi^2.
\]
On the interval $(k_2^2+n^2\pi^2,k_2^2+(n+1)^2\pi^2)$ the function $h(t;k_2)$ spans $\R$, while $h(t;k_1)$ is bounded.
It follows that $h(t_0;k_1)=h(t_0;k_2)$ for some $t_0\in (k_2^2+n^2\pi^2,k_2^2+(n+1)^2\pi^2)$.
 Hence, we may find (at least) one $t = |\alpha| > k_2^2$ such that $h(t;k_1) = h(t;k_2) \in \R$, and then choose $\lambda$ and $\mu$ appropriately, so that \eqref{eq:bifurcation2} holds for $k=k_1,k_2$. 
\end{proof}

\subsubsection*{Lyapunov-Schmidt reduction}
Let $\Lambda^*$ denote a triple $(\mu^*,\alpha^*, \lambda^*)$ such that \eqref{eq:bifurcation2} holds and suppose  that the kernel is nontrivial, so that
 \[\ker \LL(\Lambda^*)=\spn\{\phi^*_1, \dots, \phi^*_n\},\] with 
 $\phi^*_j=\cos(k_j s)\sin^*(\theta_{k_j}s)/\theta_{k_j}$ and 
 $0<k_1<\cdots< k_n$. Let $w^*_j=\TT(\Lambda^*) \phi^*_j$.
>From Lemma \ref{lemma:Lproperties} it follows that $Y=Z\oplus \range \LL(\Lambda^*)$. 
As in that lemma, we let $\Pi_Z$ be the corresponding projection onto $Z$ parallel to $\range \LL(\Lambda^*)$. 
This decomposition induces similar decompositions $\tilde X=Z \oplus (\range \LL(\Lambda^*) \cap \tilde X)$ and 
$X=\ker \FF_w(0,\Lambda^*) \oplus X_0$, where $X_0=\RR(\range \LL(\Lambda^*) \cap \tilde X)$ in which 
\[
\RR(\eta, \phi)=\left(\eta, \phi-\frac{s(\psi_0)_s\phi|_{s=1}}{(\psi_0)_s(1)}\right).
\]
Applying the Lyapunov-Schmidt reduction \cite[Thm I.2.3]{Kielhofer04} we obtain the following lemma.

\begin{lemma}
\label{lemma:Lyapunov-Schmidt}
There exist open neighborhoods $\NN$ of $0$ in $\ker \FF_w(0,\Lambda^*)$, $\MM$ of $0$ in $X_0$ and $\UU' $ of $\Lambda^*$ in $\R^3$, and a function
 $\psi \in C^\infty(\NN \times \UU' , \MM)$, 
such that
\[
\FF(w,\Lambda)=0 \quad\text{ for }\quad w\in \NN+\MM,\quad \Lambda\in \UU',
\]
if and only if $w=w^*+\psi(w^*, \Lambda)$ and $w^*=t_1 w^*_1+\cdots+t_n w_n^*\in \NN$ solves the 
finite-dimensional problem
\begin{equation}
\label{eq:Lyapunov-Schmidt}
\Phi(t, \Lambda)=0 \quad\text{ for }\quad t \in \VV, \quad \Lambda \in \UU',
\end{equation}
in which
\[
\Phi(t, \Lambda):=\Pi_Z \FF(w^*+\psi(w^*, \Lambda), \Lambda), \quad 
\]
and $\VV := \{t \in \R^n \colon t_1 w^*_1+\cdots+t_n w_n^*\in \NN\}$. The function $\psi$ has the properties $\psi(0,\Lambda)=0$ and $D_w \psi(0, \Lambda)=0$.
\end{lemma}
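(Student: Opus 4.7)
The plan is to apply the standard Lyapunov--Schmidt reduction from \cite[Thm I.2.3]{Kielhofer04}, for which all of the ingredients are already in place: $\Diff_w \FF(0,\Lambda^*)$ is Fredholm of index $0$ by Corollary~\ref{corollary:Fredholm}, the decomposition $Y=Z\oplus \range \LL(\Lambda^*)$ together with the projection $\Pi_Z$ onto $Z$ comes from Lemma~\ref{lemma:Lproperties}, the complementary splitting $X=\ker \Diff_w \FF(0,\Lambda^*)\oplus X_0$ has been prepared via the map $\RR$, and $\FF$ is analytic by Lemma~\ref{lemma:equivalence}.

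First I would write every $w$ near the origin uniquely as $w=w^*+w_0$ with $w^*\in \ker \Diff_w \FF(0,\Lambda^*)$ and $w_0\in X_0$, and split the equation $\FF(w,\Lambda)=0$ via $\Pi_Z$ and $\id-\Pi_Z$ into the range equation
\[
G(w^*,w_0,\Lambda):=(\id-\Pi_Z)\FF(w^*+w_0,\Lambda)=0
\]
and the bifurcation equation $\Pi_Z \FF(w^*+w_0,\Lambda)=0$. The crucial linear point is that $\Diff_{w_0} G(0,0,\Lambda^*)=(\id-\Pi_Z)\Diff_w \FF(0,\Lambda^*)|_{X_0}$ is an isomorphism from $X_0$ onto $\range \Diff_w \FF(0,\Lambda^*)$; this follows because $\id-\Pi_Z$ restricts to the identity on the range, while $\Diff_w \FF(0,\Lambda^*)|_{X_0}$ is a bijection onto the range by construction of $X_0$ via $\RR$ and the $\TT$-isomorphism of Proposition~\ref{prop:T}. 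The (analytic) implicit function theorem then yields neighborhoods $\NN$, $\MM$, $\UU'$ and a smooth map $\psi\colon \NN\times \UU'\to \MM$ uniquely solving $G(w^*,\psi(w^*,\Lambda),\Lambda)=0$. Substituting $w_0=\psi(w^*,\Lambda)$ into the bifurcation equation and parametrizing $w^*=\sum_{j=1}^n t_j w_j^*$ delivers the finite-dimensional reduced system $\Phi(t,\Lambda)=0$, which by construction is equivalent to $\FF(w,\Lambda)=0$ on $\NN+\MM$.

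To obtain $\psi(0,\Lambda)=0$ I would appeal to the fact that $\FF(0,\Lambda)=0$ for every $\Lambda\in\UU'$, because the laminar flows $\psi_0(\cdot;\Lambda)$ are always trivial solutions, combined with the uniqueness part of the implicit function theorem. For $D_w\psi(0,\Lambda^*)=0$ I would differentiate the identity $G(w^*,\psi(w^*,\Lambda^*),\Lambda^*)=0$ in $w^*$ at $w^*=0$ and use $\Diff_w \FF(0,\Lambda^*)v=0$ for $v\in \ker \Diff_w \FF(0,\Lambda^*)$; this forces $\Diff_w \FF(0,\Lambda^*)D_{w^*}\psi(0,\Lambda^*)v\in Z\cap \range \Diff_w \FF(0,\Lambda^*)=\{0\}$, and invertibility of $\Diff_w \FF(0,\Lambda^*)|_{X_0}$ then yields the vanishing. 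The only genuinely delicate point is the bookkeeping identifying $X_0=\RR(\range \LL(\Lambda^*)\cap \tilde X)$ as a true topological complement of the kernel inside $X$ rather than inside $\tilde X$, since the construction intertwines the two ambient spaces through $\TT$ and $\RR$; once this is verified, the remainder is a textbook application of the implicit function theorem.
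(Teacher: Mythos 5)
Your proposal is correct and takes essentially the same route as the paper: the authors simply invoke the standard Lyapunov--Schmidt reduction of \cite[Thm I.2.3]{Kielhofer04}, whose proof is precisely the splitting into range and bifurcation equations via $\Pi_Z$ followed by the implicit function theorem on $(\id-\Pi_Z)\FF=0$ that you spell out, with the Fredholm property and the decompositions supplied by Corollary~\ref{corollary:Fredholm} and Lemma~\ref{lemma:Lproperties}. Your derivations of $\psi(0,\Lambda)=0$ from the triviality of the laminar solutions and of $D_w\psi(0,\Lambda^*)=0$ by differentiating the range equation are exactly the standard arguments implicit in that citation.
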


\subsubsection*{Bifurcation from a one-dimensional kernel}

\begin{theorem}[One-dimensional bifurcation]
\label{thm:one-dimensional}
Suppose that
\begin{equation*}
\dim \ker \Diff_w \FF(0,\Lambda^*)=1,
\end{equation*}
and let $\ker \Diff_w \FF(0,\Lambda^*)=\spn \{w^*\}$.
There exists a $C^\infty$-curve of small-amplitude nontrivial solutions $\{(\overline w(t),\overline \mu(t))\colon 0<|t|<\ve\}$ 
of 
\begin{equation}
\label{eq:problem one-dimensional}
\FF(w,\mu,\alpha^*,\lambda^*)=0
\end{equation}
in $\OO \times \R$, passing 
through $(\overline w(0), \overline \mu(0))=(0, \mu^*)$ with 
\[
\overline w(t)=tw^*+O(t^2)
\] 
in $\OO$ as $t \to 0$. In a neighborhood of $(0, \mu^*)$ in $\OO \times \R$ these are the only 
nontrivial solutions of \eqref{eq:problem one-dimensional}.
The corresponding surface profiles have one crest and one trough per period, and are strictly monotone between crest and trough. 
\end{theorem}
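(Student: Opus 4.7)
The plan is to apply the Crandall--Rabinowitz theorem \cite{CrandallRabinowitz71} to the analytic map $(w,\mu) \mapsto \FF(w,\mu,\alpha^*,\lambda^*)$ at the base point $(0,\mu^*)$, using $\mu$ as bifurcation parameter. The hypotheses to be checked are: analyticity (Lemma \ref{lemma:equivalence}); the trivial branch $\FF((0,0),\Lambda) = 0$ for every $\Lambda \in \UU$, which holds since the laminar flows are solutions; the Fredholm condition $\dim \ker \Diff_w \FF(0,\Lambda^*) = \codim \range \Diff_w \FF(0,\Lambda^*) = 1$, combining the hypothesis with Corollary \ref{corollary:Fredholm}; and the transversality condition
\[
\pa_\mu \Diff_w \FF(0, \Lambda^*) w^* \not\in \range \Diff_w \FF(0, \Lambda^*).
\]

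To verify transversality I would pass to $\LL$ via $\TT$. Writing $w^* = \TT(\Lambda^*) \phi^*$ with $\phi^* = \cos(kx)\sin^*(\theta_k s)/\theta_k$, the identity $\Diff_w \FF(0,\Lambda) w^* = \LL(\Lambda) \TT(\Lambda)^{-1} \TT(\Lambda^*)\phi^*$ gives $\pa_\mu \Diff_w \FF(0,\Lambda^*) w^* \equiv \pa_\mu \LL(\Lambda^*)\phi^* \pmod{\range \LL(\Lambda^*)}$. Only the first component of $\LL(\Lambda)\phi^*$ depends on $\mu$, cf.\ \eqref{eq:Lexpression}, and using $(\psi_0)_s(1) = -\mu\theta_0 \sin(\lambda)$ and $(\psi_0)_{ss}(1) = -\mu \theta_0^2 \cos(\lambda)$ a short computation yields
\[
[\LL(\Lambda)\phi^*]_1(x) = \cos(kx)\bigl(\mu A + B/\mu\bigr),
\]
with $A, B$ independent of $\mu$ and $B = \sin^*(\theta_k)/(\theta_0\theta_k\sin\lambda) \neq 0$ by the fact $\sin^*(\theta_k) \neq 0$ from the proof of Lemma \ref{lemma:Bifurcation condition}. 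Since the bifurcation relation \eqref{eq:bifurcation2} reads exactly $(\mu^*)^2 = -B/A$ (in particular $A \neq 0$, as $A = 0$ would force $1/(\mu^2\theta_0^2\sin^2\lambda) = 0$),
\[
\pa_\mu [\LL(\Lambda^*)\phi^*]_1 = \bigl(A - B/(\mu^*)^2\bigr)\cos(kx) = 2A\cos(kx).
\]
The $Y$-inner product of $(2A\cos(kx), 0)$ with the basis vector $(\eta_{\phi^*},\phi^*)$ of $Z$ is proportional to $A\cdot\sin^*(\theta_k) \neq 0$, so by Lemma \ref{lemma:Lproperties} the $Z$-projection is nonzero and transversality is established.

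Crandall--Rabinowitz then yields the $C^\infty$-curve $\{(\overline w(t), \overline \mu(t)) : 0 < |t| < \ve\}$ of nontrivial solutions passing through $(0,\mu^*)$ with $\overline w(t) = tw^* + O(t^2)$ and local uniqueness. For the claim on the surface profile, Proposition \ref{prop:P} gives $\eta^*(x) = c\cos(kx)$ with $c \ne 0$, so $\overline\eta(t)/t \to c\cos(kx)$ in $C^{2+\beta}$, and hence $\overline\eta_x(t)/t \to -ck\sin(kx)$ in $C^{1+\beta}$ as $t \to 0$. The limit has exactly two transversal zeros per $2\pi/k$-period, so for small $|t| \neq 0$ the function $\overline\eta_x(t)$ has precisely two zeros per such period by $C^1$-stability of transversal zeros; evenness pins one of them at $x = 0$. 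Between consecutive zeros $\overline\eta_x(t)$ keeps a constant sign, which gives strict monotonicity of $\overline\eta(t)$ between adjacent crest and trough, with exactly one crest and one trough per $2\pi/k$-period.

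The main obstacle I expect is the transversality verification, since the derivative $\pa_\mu \Diff_w \FF(0,\Lambda^*) w^*$ is not obviously accessible in the geometry of $Y$; the key is to transfer the calculation to $\LL$, for which the explicit formula \eqref{eq:Lexpression} and the algebraic relation \eqref{eq:bifurcation2} reduce the transversality to a one-line manipulation. All remaining pieces, including analyticity, the Fredholm structure, and the leading-order perturbation argument for monotonicity, are direct consequences of the results already established in Sections \ref{sec:preliminaries} and \ref{sec:functional}.
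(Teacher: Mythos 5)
Your proposal is correct and follows essentially the same route as the paper: Crandall--Rabinowitz (which the paper implements via an explicit Lyapunov--Schmidt reduction) with $\mu$ as the bifurcation parameter, transversality transferred to $\LL$ through the decomposition $\Diff^2_{w\mu}\FF(0,\Lambda^*)w^* = \Diff_\mu\LL(\Lambda^*)\phi^* + \LL(\Lambda^*)\Diff_\mu\TT^{-1}(\Lambda^*)w^*$, and the observation that vanishing of the resulting projection would require $\mu^2 = +B/A$ while \eqref{eq:bifurcation2} forces $\mu^2=-B/A$. Your packaging of the first component as $\mu A + B/\mu$ is a tidier version of the paper's computation \eqref{eq:partialLmu}, and your transversal-zero argument for the crest/trough count matches what the paper carries out in Corollary~\ref{cor:one-dimensional}.
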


\begin{proof}
We apply the local bifurcation theorem with a one-dimensional kernel by Crandall and Rabinowitz \cite{CrandallRabinowitz71}. 
We shall repeat the details here in order to clarify the proof in the case of a two-dimensional kernel below.
By Lemma \ref{lemma:Lyapunov-Schmidt} the equation $\FF(w,\Lambda)=0$ is locally equivalent to 
the finite-dimensional problem $\Phi(t, \Lambda)=0$ where $t\in \R$. Note that $\Phi(t,\Lambda)=\Phi_1(t,\Lambda)\tilde w^*$ where $\tilde w^*=(\eta_{\phi^*}, \phi^*)$, $\phi^*=\TT^{-1}(\Lambda^*)w^*$, and $\Phi_1$ is a real-valued function.
Since $\Phi_1(0,\Lambda)=0$ we find that $\Phi_1(t,\Lambda)=t\Psi(t,\Lambda)$ in which
\[
\Psi(t,\Lambda)=\int_0^1 (\pa_t \Phi_1)(zt, \Lambda)\, dz.
\]
For $t\ne 0$ the equations $\Phi_1=0$ and $\Psi=0$ are equivalent.
We now prove that $\pa_\mu \Psi(0,\Lambda^*)\ne 0$ and apply the implicit function theorem to $\Psi$.
We have that
\[
\pa_\mu \Psi(0, \Lambda^*)=\pa_\mu \pa_t \Phi_1(0,\Lambda^*).
\]
Moreover, 
\begin{align*}
\pa_\mu\pa_t \Phi(0,\Lambda^*)&=\Pi_Z \Diff^2_{w\mu} \FF(0, \Lambda^*) w^*\\
&=\frac{\langle \Diff^2_{w\mu} \FF(0,\Lambda^*) w^*, \tilde w^*\rangle_Y}{\| \tilde w^*\|_Y^2} \tilde w^*.
\end{align*}
Hence, $\pa_\mu \Psi(0, \Lambda^*)\ne 0$ if and only if $\langle \Diff^2_{w\mu} \FF(0,\Lambda^*) w^*, \tilde w^*\rangle_Y\ne 0$.
Recall that
$\Diff_w \FF(0,\Lambda)=\LL(\Lambda)\TT^{-1}(\Lambda)$. Thus, 
\begin{equation}
\label{eq:derivativesum}
\Diff^2_{w \mu} \FF(0,\Lambda^*)w^*=\Diff_\mu \mathcal L(\Lambda^*)\phi^*+\LL(\Lambda^*)\Diff_\mu \TT^{-1}(\Lambda^*) w^*.
\end{equation}
Since $\LL = \Diff_w \FF(0) \circ \TT$ the second term in the right hand side of \eqref{eq:derivativesum} belongs to $\range \Diff_w \FF(0,\Lambda^*)$, and we find that
$\langle \Diff^2_{w\mu} \FF(0,\Lambda^*) w^*, \tilde w^*\rangle_Y=
\langle \Diff_\mu \LL(\Lambda^*)\phi^*, \tilde w^*\rangle_Y$. 
A straightforward calculation shows that
\begin{align*}
&\Diff_\mu\mathcal L(\Lambda^*)\phi^*\\
&=\left(\left[-\theta_0\sin(\lambda^*)\cos^*(\theta_k)-\left(-\theta_0^2\cos(\lambda^*)+\frac{1}{(\mu^*)^2\theta_0 \sin(\lambda^*)}\right)\frac{\sin^*(\theta_k)}{\theta_k}\right]\cos(k x),0\right),
\end{align*}
so that
\begin{equation}
\label{eq:partialLmu}
\begin{aligned}
&\langle \Diff_\mu \LL(\Lambda^*)\phi^*, \tilde w^*\rangle_Y\\
&=
\frac{\pi}{(\psi_0)_s(1)} \left[\theta_0\sin(\lambda^*)\cos^*(\theta_k)+\left(-\theta_0^2\cos(\lambda^*)+\frac{1}{(\mu^*)^2\theta_0 \sin(\lambda^*)}\right)\frac{\sin^*(\theta_k)}{\theta_k}\right]\frac{\sin^*(\theta_k)}{\theta_k}.
\end{aligned}
\end{equation}
Using the fact that $\sin^*(\theta_k)/\theta_k \ne 0$ (see the proof of Lemma \ref{lemma:Bifurcation condition}) and 
rearranging, we obtain that the right hand side in \eqref{eq:partialLmu} vanishes if and only if 
\[
\theta_k \cot^*(\theta_k) =- \frac{1}{(\mu^*)^2 \theta_0^2 \sin^2(\lambda^*)} + \theta_0 \cot(\lambda^*),
\]
which contradicts \eqref{eq:bifurcation2}. 
\end{proof}

\begin{rem}\label{rem:two-dim}
It follows from the implicit function theorem and the bifurcation equation~\eqref{eq:bifurcation2} that as $\alpha$ and $\lambda$ are varied in a neighborhood of $(\alpha^*, \lambda^*)$ one obtains a whole two-dimensional 
family of bifurcating curves which depend smoothly on $\alpha$ and $\lambda$.
\end{rem}

\subsubsection*{Bifurcation from a two-dimensional kernel}

When the kernel is two-dimensional we can still apply Theorem~\ref{thm:one-dimensional} by restricting our attention to functions that are 
$2\pi/k_2$-periodic in the $x$-variable. Let $X^{(k_2)}$ denote the subspace (subset) of such functions in any (open set in a) Banach space $X$. 
In particular, the restriction $\FF^{(k_2)} := \FF|_{\OO^{(k_2)}}$ is well-defined, and, in view of that $k_2 > k_1$, we have that 
$\ker \Diff_w \FF^{(k_2)}(0,\Lambda^*) = \spn \{w_2^* \} := \TT(\Lambda^*) \spn \{\phi_2^*\}$.  
An application of Theorem~\ref{thm:one-dimensional} and Remark~\ref{rem:two-dim} to $\FF^{(k_2)}$ yields that the set of nontrivial solutions 
of 
\begin{equation}
\label{eq:problem two-dimensional}
\FF(w,\mu, \alpha,\lambda^*)=0
\end{equation}
in $\OO^{(k_2)}\times \R^2$ is locally given by the two-dimensional sheet \[\SS^{(k_2)}=\{(\overline w_2(t,\alpha),\overline \mu_2(t,\alpha),\alpha)\colon (t,\alpha)\in \VV_2, t\ne 0\},\] where $\VV_2$ is an open neighborhood of 
$(0, \alpha^*)$ in $\R^2$. When $k_2$ is not an integer multiple of $k_1$ the same argument gives a set $\SS^{(k_1)}$ of nontrivial solutions of \eqref{eq:problem two-dimensional} in $\OO^{(k_1)}\times \R^2$.
It is however also possible to obtain solutions that are neither in $X^{(k_1)}$ nor in $X^{(k_2)}$, as we will now describe.

\begin{theorem}[Two-dimensional bifurcation]\label{thm:two-dimensional}
Suppose that
\[
\dim \ker \Diff_w \FF(0,\Lambda^*)=2.
\]
Define
\begin{equation}\label{eq:a}
a:= \theta_{k_1}\cot^*(\theta_{k_1})= \theta_{k_2}\cot^*(\theta_{k_2})
\end{equation}
as the left-hand side of~\eqref{eq:bifurcation2}, and assume that that $a \not \in \{0,1\}$, or that $\theta_{k_2}=0$ (in which case $a=1$). 
\medskip

\begin{itemize}
\item[i)]
If $k_2/k_1\not \in \Z$ there exists a smooth sheet of small-amplitude nontrivial solutions 
\[
\SS^{\text{mixed}} := \{(\overline w(t_1,t_2),\overline \mu(t_1,t_2), \overline \alpha(t_1,t_2))\colon 0<t_1^2+t_2^2<\ve^2\}
\] 
of \eqref{eq:problem two-dimensional} in $\OO \times \R^2$, passing 
through $(\overline w(0,0), \overline \mu(0,0), \overline \alpha(0,0))=(0, \mu^*, \alpha^*)$ with 
\[
\overline w(t_1,t_2)=t_1 w_1^*+t_2 w_2^* +O(t_1^2+t_2^2)
\]
In a neighborhood of $(0, \mu^*, \alpha^*)$ in $\OO \times \R^2$ the union $\SS^\text{mixed} \cup \SS^{(k_1)}\cup \SS^{(k_2)}$  contains all nontrivial solutions of \eqref{eq:problem two-dimensional}. 
\medskip

\item[ii)]
Let $\delta > 0$. If $k_2/k_1\in \Z$  there exists a smooth sheet of small-amplitude nontrivial solutions 
\[
\SS^{\text{mixed}}_\delta := \{(\overline w(r,\upsilon),\overline \mu(r,\upsilon), \overline \alpha(r,\upsilon))\colon 0<r<\ve, \delta < |\upsilon| < \pi-\delta \}
\]
of \eqref{eq:problem two-dimensional} in $\OO \times \R^2$, passing 
through $(\overline w(0,\upsilon), \overline \mu(0,\upsilon), \overline \alpha(0,\upsilon))=(0, \mu^*, \alpha^*)$ with 
\[
\overline w(r,\upsilon)=r\cos(\upsilon) w_1^*+r\sin(\upsilon) w_2^* +O(r^2).
\]
In a neighborhood of $(0, \mu^*, \alpha^*)$ in $\OO \times \R^2$  the union $\SS^\text{mixed}_\delta \cup  \SS^{(k_2)}$ contains all nontrivial solutions of \eqref{eq:problem two-dimensional} such that $\delta < |\upsilon| < \pi-\delta$, where $r \cos(\upsilon)w_1^*+ r\sin(\upsilon)w_2^*$ is the projection of $w$ on $\ker \FF_w(0, \Lambda^*)$ parallel to $X_0$.
\end{itemize}
\end{theorem}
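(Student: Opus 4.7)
My plan is to apply Lemma~\ref{lemma:Lyapunov-Schmidt} with $\lambda=\lambda^*$ fixed and $(\mu,\alpha)$ as bifurcation parameters, reducing $\FF(w,\mu,\alpha,\lambda^*)=0$ to the finite-dimensional system $\Phi=(\Phi_1,\Phi_2)=0$ on $\R^2\times\R^2$, where $\Phi_j(t_1,t_2,\mu,\alpha)$ is the coefficient of $\tilde w_j^*$ in $\Pi_Z\FF(w^*+\psi(w^*,\Lambda),\Lambda)$ with $w^*=t_1w_1^*+t_2w_2^*$. The structural observation driving the whole proof is that $\Pi_Z$ preserves the subspace of $2\pi/k_j$-periodic functions, so that by the uniqueness clause of Lemma~\ref{lemma:Lyapunov-Schmidt} the correction $\psi(w^*,\Lambda)$ shares the periodicity of $w^*$. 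Consequently, the pairing $\langle\FF(w^*+\psi,\Lambda),\tilde w_i^*\rangle_Y$ vanishes whenever $k_i\notin k_j\Z$. Since $0<k_1<k_2$, one always has $\Phi_1(0,t_2,\Lambda)\equiv 0$ and thus a smooth factorization $\Phi_1=t_1\Psi_1$; in case~(i), the hypothesis $k_2/k_1\notin\Z$ yields similarly $\Phi_2=t_2\Psi_2$, whereas in case~(ii) only the factorization of $\Phi_1$ is available.

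In case~(i) I would apply the implicit function theorem directly to the reduced system $\Psi_1=\Psi_2=0$ at $(t_1,t_2,\mu,\alpha)=(0,0,\mu^*,\alpha^*)$, producing $(\overline\mu(t_1,t_2),\overline\alpha(t_1,t_2))$. In case~(ii), the absence of a factorization of $\Phi_2$ forces a polar change of variables $t_1=r\cos\upsilon$, $t_2=r\sin\upsilon$: using $\Phi_2(0,0,\Lambda)=0$ together with Taylor's theorem, one smoothly extracts $\Phi_2(r\cos\upsilon,r\sin\upsilon,\Lambda)=r\bar\Phi_2(r,\upsilon,\Lambda)$, and then solves the system $\{\Psi_1(r\cos\upsilon,r\sin\upsilon,\Lambda)=0,\;\bar\Phi_2(r,\upsilon,\Lambda)=0\}$ by the implicit function theorem at each fixed $\upsilon$ with $\sin\upsilon\ne 0$. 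The leading-order expansion $\overline w=t_1w_1^*+t_2w_2^*+O(t_1^2+t_2^2)$ (and its polar analogue) is a direct consequence of $\psi(0,\Lambda)=0$ and $\Diff_w\psi(0,\Lambda)=0$ from Lemma~\ref{lemma:Lyapunov-Schmidt}, while the claimed completeness relative to the branches $\SS^{(k_j)}$ follows from the uniqueness part of the implicit function theorem together with the identification of solutions with $t_j=0$ as the purely $2\pi/k_j$-periodic ones.

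The hard part is verifying that the relevant $2\times 2$ Jacobian with respect to $(\mu,\alpha)$ is nondegenerate at the bifurcation point. The $L^2$-orthogonality of $\cos(k_1x)$ and $\cos(k_2x)$ makes the mixed entries $\partial_\nu\partial_{t_i}\Phi_j(0,0,\Lambda^*)$ vanish for $i\ne j$, so in both cases the determinant reduces, up to a factor of $\sin\upsilon$ in case~(ii), to
\[
J_1^\mu J_2^\alpha-J_1^\alpha J_2^\mu,\qquad J_j^\nu:=\frac{\langle\partial_\nu\LL(\Lambda^*)\phi_j^*,\tilde w_j^*\rangle_Y}{\|\tilde w_j^*\|_Y^2}.
\]
The quantities $J_j^\mu$ are exactly those computed in the proof of Theorem~\ref{thm:one-dimensional} and are nonzero by~\eqref{eq:bifurcation2}. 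What remains is to compute $J_j^\alpha$ by differentiating the expression~\eqref{eq:Lexpression} through $\theta_0=\sqrt{-\alpha}$ and the bulk term $-\alpha\phi$, in analogy with~\eqref{eq:partialLmu}. A careful but routine calculation should express the above determinant as an explicit function of $\theta_{k_1},\theta_{k_2}$ and $\Lambda^*$ whose only zeros correspond to the excluded values $a\in\{0,1\}$, the limit $\theta_{k_2}=0$ (where $a=1$) being handled via the conventions fixed in the remark after Lemma~\ref{lemma:Bifurcation condition}. The $\sin\upsilon$ factor in case~(ii) is precisely what dictates the restriction $\delta<|\upsilon|<\pi-\delta$, forcing one to stay away from the purely $w_1^*$-directions where the polar reduction degenerates.
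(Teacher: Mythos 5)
Your outline coincides with the paper's proof in every structural respect: the Lyapunov--Schmidt reduction in $(\mu,\alpha)$ with $\lambda^*$ frozen, the periodicity argument giving $\Phi_1(0,t_2,\Lambda)\equiv 0$ (and $\Phi_2(t_1,0,\Lambda)\equiv 0$ only when $k_2/k_1\notin\Z$), the factorizations $\Phi_j=t_j\Psi_j$, the polar blow-up in case (ii) with the resulting $\sin\upsilon$ degeneracy forcing $\delta<|\upsilon|<\pi-\delta$, and the reduction of the Jacobian to $J_1^\mu J_2^\alpha-J_1^\alpha J_2^\mu$ via orthogonality of the Fourier modes.

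The one step you defer --- that this determinant is nonzero precisely under the stated hypothesis on $a$ --- is where essentially all the work lies, and it is not safely dismissed as routine. Writing $\sigma_j:=\sin^*(\theta_{k_j})/\theta_{k_j}$, the paper's computation shows $J_j^\mu=A\sigma_j^2$ with a single constant $A$ independent of $j$ (this uses the bifurcation relation \eqref{eq:bifurcation2} to replace $\cos^*(\theta_{k_j})$ by $a\,\sigma_j$), and $J_j^\alpha=B\sigma_j^2+f(k_j)$ with $B$ likewise $j$-independent, so the determinant collapses to $A\bigl(\sigma_1^2 f(k_2)-\sigma_2^2 f(k_1)\bigr)$; one then verifies in three cases (both modes oscillatory, $\theta_{k_2}=0$, one oscillatory and one hyperbolic) that this expression is a nonzero multiple of $a^{-1}(a-1)$ times a manifestly nonvanishing quantity. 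In particular the determinant \emph{does} vanish at $a=0$ and at $a=1$ with $\theta_{k_2}\neq 0$, so the hypothesis $a\notin\{0,1\}$ is exactly the nondegeneracy condition rather than a convenient sufficient assumption; asserting the outcome of the calculation without performing it leaves the proof incomplete at its only substantive computational point.
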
 

\begin{rem}\label{rem:two-dimensional}
When $t_1\ne 0$ and $t_2\ne 0$ we find solutions which are neither $2\pi/k_1$ nor $2\pi/k_2$-periodic.
A more precise result could be obtained by studying the Taylor polynomial of $\Phi$ at $(0,\Lambda^*)$ of sufficiently high order as in e.g. \cite{JonesToland86, Jones89, JonesToland85, W09}. The computations are in general quite involved. We settle for the mere existence of multimodal solutions here.
\end{rem}

{\em Proof of Theorem \ref{thm:two-dimensional}}.
Define $\tilde w^*_j := ( \eta_{\phi_j^*},\phi_j^*) \in \tilde X$, $j=1,2$, and recall that $Z=\spn\{\tilde w^*_1, \tilde w^*_2\}$.
If $\Pi_{1} \Phi=\Phi_1 \tilde w_1^*$ and $\Pi_{2} \Phi=\Phi_2 \tilde w_2^*$, where $\Pi_{1}$ and $\Pi_{2}$ denote the projections onto $\spn \{\tilde w_1^*\}$ and $\spn \{\tilde w_2^*\}$, respectively, equation \eqref{eq:Lyapunov-Schmidt} takes the form
\begin{equation}
\label{eq:reduced system}
\begin{aligned}
\Phi_1(t_1, t_2, \Lambda)&=0,\\
\Phi_2(t_1, t_2, \Lambda)&=0.
\end{aligned}
\end{equation}
This is a system of two equations with five unknowns, and we note that it has the trivial solution $(0,0,\Lambda)$ for all $\Lambda\in \UU'$. 
\medskip

Assume first that $k_2/k_1 \not \in \Z$ and suppose that $t_1=0$ and $w^*=t_2 w_2^*$, $t_2\in \R$. 
An application of the Lyapunov-Schmidt reduction in the subspace of $2\pi/k_2$-periodic functions yields that $\psi(w^*, \Lambda)$ is then $2\pi/k_2$-periodic. Hence, 
\begin{equation}
\label{eq:Phi1 vanishing}
\Phi_1(0,t_2, \Lambda)=0, \qquad \text{ for all } t_2, \Lambda.
\end{equation}
Similarly,
\begin{equation}
\label{eq:Phi2 vanishing}
\Phi_2(t_1,0, \Lambda)=0, \qquad \text{ for all } t_1, \Lambda.
\end{equation}
Let $\Psi:=(\Psi_1, \Psi_2)$,
\[
\Psi_1(t_1, t_2, \Lambda):=\int_0^1 (\pa_{t_1} \Phi_1)(zt_1, t_2, \Lambda)\, dz, \quad \Psi_2(t_1, t_2, \Lambda):=\int_0^1 (\pa_{t_2} \Phi_2)(t_1, z t_2, \Lambda)\, dz.
\]
From \eqref{eq:Phi1 vanishing} and \eqref{eq:Phi2 vanishing} it follows that 
\eqref{eq:reduced system} is equivalent to that 
\begin{equation}
\label{eq:reduced system 2}
\begin{aligned}
t_1 \Psi_1(t_1, t_2, \Lambda)&=0,\\
t_2 \Psi_2(t_1, t_2, \Lambda)&=0.
\end{aligned}
\end{equation}
There are four possibilities. The trivial case $t_1, t_ 2 = 0$ corresponds to trivial solutions. When $t_1 = 0$ but $t_2 \neq 0$ the system reduces to $\Psi_2(0,t_2,\Lambda)=0$, the solutions of which are locally given by the set $\SS^{(k_2)}$; when $t_1 \neq 0$ but $t_2 = 0$ we similarly obtain the set $\SS^{(k_1)}$. It remains to investigate the solutions of $\Psi_1(t_1, t_2, \Lambda)=\Psi_2(t_1, t_2, \Lambda) = 0$ in a neighborhood of $(0,0,\Lambda^*)$. Clearly, when those exist, they will intersect and connect the sets $\SS^{(k_1)}$ and $\SS^{(k_2)}$.
Since 
\[
\pa_{t_1} \Phi(0,0,\Lambda^*)=\Pi_Z \Diff_w \FF(0, \Lambda^*) w_1^*=0,
\]
we have that 
\[
\Psi_1(0, 0, \Lambda^*)= \pa_{t_1} \Phi_1(0, 0, \Lambda^*)=0,
\]
Similarly,
\[
\Psi_2(0, 0, \Lambda^*)= \pa_{t_2} \Phi_2(0, 0, \Lambda^*)=0.
\]
We now apply the implicit function theorem to $\Psi$ at the point $(0,0, \Lambda^*)$. It suffices to prove that the matrix 
\[
\begin{pmatrix}
\pa_\mu \Psi_1(0,0,\Lambda^*) & \pa_\mu \Psi_2(0,0,\Lambda^*)\\
\pa_\alpha \Psi_1(0,0,\Lambda^*) &  \pa_\alpha \Psi_2(0,0,\Lambda^*)
\end{pmatrix}
\]
is invertible.
To this effect, we note that 
\[
\pa_\mu \Psi_1(0,0,\Lambda^*) =\pa_{t_1}\pa_\mu \Phi_1(0, 0, \Lambda^*),
\]
and
\[
\pa_{t_1}\pa_\mu \Phi(0,0,\Lambda^*)=\Pi_Z \Diff^2_{w\mu} \FF(0, \Lambda^*) w_1^*.
\]
We have that
\begin{align*}
\Pi_Z \Diff^2_{w\mu} \FF(0, \Lambda^*) w_1^*&=
 \frac{\langle \Diff^2_{w\mu} \FF(0,\Lambda^*) w_1^*, \tilde w_1^*\rangle_Y}{\| \tilde w_1^*\|_Y^2}  \tilde w_1^*,
\end{align*}
since $\Diff^2_{w\mu} \FF(0,\Lambda^*) w_1^*$ is proportional to $\cos(k_1 x)$ and therefore orthogonal to $\tilde w_2^*$. Consequently, 
\[
\pa_\mu \Psi_1(0,0,\Lambda^*) =\pa_{t_1}\pa_\mu \Phi_1(0, 0, \Lambda^*)=\frac{\langle \Diff^2_{w\mu} \FF(0,\Lambda^*) w_1^*,\tilde w_1^*\rangle_Y}{\| \tilde w_1^*\|_Y^2} .
\]
Using the observation \eqref{eq:derivativesum} we find that
\[
\pa_\mu \Psi_1(0,0,\Lambda^*)=\frac{\langle \Diff_\mu \LL(\Lambda^*) \phi_1^*, \tilde w_1^*\rangle_Y}{\| \tilde w_1^*\|_Y^2} 
\]
Similar arguments show that
\[
\pa_\mu \Psi_2(0,0,\Lambda^*)=\frac{\langle \Diff_\mu \LL(\Lambda^*) \phi_2^*, \tilde w_2^*\rangle_Y}{\| \tilde w_2^*\|_Y^2}, \quad 
\]
and
\[
\pa_\alpha \Psi_j(0,0,\Lambda^*)=\frac{\langle \Diff_\alpha \LL(\Lambda^*) \phi_j^*, \tilde w_j^* \rangle_Y}{\| \tilde w_j^*\|_Y^2}, \qquad j =1,2. 
\]
Thus,
\begin{align*}
\det \begin{pmatrix}
\pa_\mu \Psi_1(0,0,\Lambda^*) & \pa_\mu \Psi_2(0,0,\Lambda^*)\\
\pa_\alpha \Psi_1(0,0,\Lambda^*) &  \pa_\alpha \Psi_2(0,0,\Lambda^*)
\end{pmatrix}
=
C \det 
\begin{pmatrix}
\langle \Diff_{\mu} \LL(\Lambda^*) \phi^*_1 ,\tilde w^*_1 \rangle_Y & 
\langle \Diff_{\mu} \LL(\Lambda^*) \phi^*_2 ,\tilde w^*_2  \rangle_Y\\
\langle \Diff_{\alpha} \LL(\Lambda^*) \phi^*_1,\tilde w^*_1 \rangle_Y & 
\langle \Diff_{\alpha} \LL (\Lambda^*) \phi^*_2,\tilde w^*_2 \rangle_Y 
\end{pmatrix},
\end{align*}
where
$C=\| \tilde w_1^*\|_Y^{-2} \| \tilde w_2^*\|_Y^{-2}\ne 0$.
From \eqref{eq:partialLmu} we see that
 \begin{align*}
&\left\langle \Diff_{\mu} \LL (\Lambda^*) \phi^*_j ,\tilde w^*_j \right\rangle_Y\\
&=
\frac{\pi}{(\psi_0)_s(1)} \left[\theta_0\sin(\lambda)\cos^*(\theta_{k_j})+\left(-\theta_0^2\cos(\lambda)+\frac{1}{(\mu^*)^2\theta_0 \sin(\lambda)}\right)\frac{\sin^*(\theta_{k_j})}{\theta_{k_j}}\right]\frac{\sin^*(\theta_{k_j})}{\theta_{k_j}}\\
&= \underbrace{\frac{\pi}{(\psi_0)_s(1)}\left[\theta_0\sin(\lambda)a-\theta_0^2\cos(\lambda)
+\frac{1}{(\mu^*)^2\theta_0 \sin(\lambda)}\right]}_{=A} \left(\frac{\sin^*(\theta_{k_j})}{\theta_{k_j}}\right)^2\\
&= A \left(\frac{\sin^*(\theta_{k_j})}{\theta_{k_j}}\right)^2.
 \end{align*} 
A straightforward calculation shows that
 \begin{align*}
\Diff_{\alpha} \LL (\Lambda^*) \phi&=\left(\frac{\mu \sin(\lambda)}{2\theta_0}\phi_s-\left(\mu\cos(\lambda) - \frac{1}{2\mu\sin(\lambda) \theta_0^3}\right)\phi,-\phi\right).
\end{align*}
From this it follows that
\begin{align*}
\left\langle \Diff_{\alpha} \LL(\Lambda^*) \phi^*_j ,\tilde w^*_j \right\rangle_Y=B\left(\frac{\sin^*(\theta_{k_j})}{\theta_{k_j}}\right)^2+f(k_j),
\end{align*}
where $B=\frac{-\pi}{(\psi_0)_s(1)} \left[\frac{\mu \sin \lambda}{2\theta_0}a-\left(\mu\cos \lambda - \frac{1}{2\mu\sin \lambda \theta_0^3}\right)\right]$
and
\[
f(k_j):=\begin{cases}
\frac\pi2\sgn(k_j^2+\alpha)\frac{\theta_{k_j}-\cos^*(\theta_{k_j})\sin^*(\theta_{k_j})}{\theta_{k_j}^3}, & 
\theta_{k_j} \ne 0,\\
-\frac{\pi}{3}, & \theta_{k_j}=0.
 \end{cases}
 \]
We thus have that
\begin{align*}
&
\det \begin{pmatrix}
\left\langle \Diff_{\mu} \LL(\Lambda^*) \phi^*_1 ,\tilde w^*_1 \right\rangle_Y & 
\left\langle \Diff_{\mu} \LL (\Lambda^*) \phi^*_2 ,\tilde w^*_2  \right\rangle_Y\\
\left\langle \Diff_{\alpha} \LL(\Lambda^*) \phi^*_1,\tilde w^*_1 \right\rangle_Y & 
\left\langle \Diff_{\alpha} \LL(\Lambda^*) \phi^*_2,\tilde w^*_2 \right\rangle_Y 
\end{pmatrix}\\
&=\det \begin{pmatrix}
A(\sin^*(\theta_{k_1}))^2/\theta_{k_1}^2 & 
A(\sin^*(\theta_{k_2}))^2/\theta_{k_2}^2\\
B(\sin^*(\theta_{k_1}))^2/\theta_{k_1}^2+f(k_1) & 
B(\sin^*(\theta_{k_2}))^2/\theta_{k_2}^2+f(k_2) 
\end{pmatrix}\\
&= A\det \begin{pmatrix}
(\sin^*(\theta_{k_1}))^2/\theta_{k_1}^2 & 
(\sin^*(\theta_{k_2}))^2/\theta_{k_2}^2\\
f(k_1) & 
f(k_2) 
\end{pmatrix}\\
&=A\left(\frac{(\sin^*(\theta_{k_1}))^2}{\theta_{k_1}^2}f(k_2)-
\frac{(\sin^*(\theta_{k_2}))^2}{\theta_{k_2}^2}f(k_1)\right).
\end{align*}
Note that if $a=0$, then $k_j^2+\alpha<0$ and $\cos(\theta_{k_j})=0$,  $j=1,2$. Hence, 
$f(k_j)=-\pi/(2\theta_{k_j}^2)$ in that case, and
\[
\frac{(\sin^*(\theta_{k_1}))^2}{\theta_{k_1}^2}f(k_2)-
\frac{(\sin^*(\theta_{k_2}))^2}{\theta_{k_2}^2}f(k_1)=-\frac{\pi}{2\theta_{k_1}^2\theta_{k_2}^2}
(\sin^2(\theta_{k_1})-\sin^2(\theta_{k_2})) =0.
\]
We therefore assume that $a\ne 0$ from now on.
We consider the three different possible cases. 
\medskip

\noindent {Case I}. Assume that $k_1^2+\alpha<0$ and $k_2^2+\alpha<0$.
In this case we find that
\begin{align*}
&\frac{(\sin^*(\theta_{k_1}))^2}{\theta_{k_1}^2} f(k_2) - \frac{(\sin^*(\theta_{k_2}))^2}{\theta_{k_2}^2} f(k_1)\\
&=\frac{\pi}{2} \frac{1}{ \theta_{k_1}^2 \theta_{k_2}^2}\, a^{-1} \left[\sin^2(\theta_{k_2}) \left((a-1)+\sin^2(\theta_{k_1})\right)-
\sin^2(\theta_{k_1}) \left((a-1)+\sin^2(\theta_{k_2})\right)\right]\\
&=\frac{\pi}{2} \frac{1}{ \theta_{k_1}^2 \theta_{k_2}^2}\, a^{-1}(a-1) \left(\sin^2(\theta_{k_2})-
\sin^2(\theta_{k_1}) \right).
\end{align*}
This is non-zero if $a \ne 1$, in view of the equality $\theta_{k_1}\cot(\theta_{k_1})=\theta_{k_2}\cot(\theta_{k_2})$.
 \medskip

\noindent {Case II}. Assume that $k_1^2+\alpha<0$ and $k_2^2+\alpha=0$.
In this case $a=1$ and we find that
\begin{align*}
\frac{(\sin^*(\theta_{k_1}))^2}{\theta_{k_1}^2}f(k_2)-\frac{(\sin^*(\theta_{k_2}))^2}{\theta_{k_2}^2}f(k_1)
&=-\frac{\pi}{3}\frac{\sin^2(\theta_{k_1})}{\theta_{k_1}^2}+\frac{\pi}{2\theta_{k_1}^2}\left(1-\frac{\cos(\theta_{k_1})\sin(\theta_{k_1})}{\theta_{k_1}}\right)\\
&=\frac{\pi}{6}\frac{\sin^2(\theta_{k_1})}{\theta_{k_1}^2},
\end{align*}
which is non-zero by the argument used in Lemma \ref{lemma:Bifurcation condition}.
 \medskip
 
\noindent {Case III.} Assume that
$k_1^2 + \alpha <0$ while $k_2^2+\alpha>0$.
We then find that 
\begin{align*}
&\frac{(\sin^*(\theta_{k_1}))^2}{\theta_{k_1}^2} f(k_2) - \frac{(\sin^*(\theta_{k_2}))^2}{\theta_{k_2}^2} f(k_1)\\
&=\frac{\pi}{2} \frac{1}{ \theta_{k_1}^2 \theta_{k_2}^2}\, a^{-1} \left[\sin^2(\theta_{k_1})\left( (a-1)-\sinh^2(\theta_{k_2}) \right)+
\sinh^2(\theta_{k_2}) \left((a-1)+\sin^2(\theta_{k_1})\right)\right]\\
&=\frac{\pi}{2} \frac{1}{ \theta_{k_1}^2 \theta_{k_2}^2}\, a^{-1}(a-1)\left(\sin^2(\theta_{k_1})+
\sinh^2(\theta_{k_2})\right) \ne 0,
\end{align*}
when $a \ne 1$.
This concludes the proof of the first part of the theorem.
\medskip

Assume next that $k_2/k_1\in \Z$. In this case \eqref{eq:Phi1 vanishing} remains true, whereas
\eqref{eq:Phi2 vanishing} may be false. We introduce $\Psi_1$ as before, but write it in polar coordinates:
\[
\Psi_1(r,\upsilon, \Lambda)=\int_0^1 \pa_{t_1}\Phi_1(z r\cos(\upsilon), r\sin(\upsilon), \Lambda)\, dz.
\]
We do not know that $\Phi_2$ vanishes when $t_2=0$, but we can still use the identity $\Phi_2(0,0,\Lambda)=0$.  Redefine $\Psi_2$ as
\begin{align*}
&\Psi_2(r,\upsilon, \Lambda)\\
&:=\int_0^1\Bigl\{ \pa_{t_1}\Phi_2 (z r\cos(\upsilon), z r\sin(\upsilon), \Lambda)\cos(\upsilon)+\pa_{t_2}\Phi_2 (z r\cos(\upsilon), z r\sin(\upsilon), \Lambda)\sin(\upsilon)\Bigr\}\, dz.
\end{align*}
We then have that $r \Psi_2(r,\upsilon,\Lambda)=\Phi_2(r\cos(\upsilon), r\sin(\upsilon), \Lambda)$, and equation~\eqref{eq:reduced system} is equivalent to $(\Psi_1(r, \upsilon, \Lambda), \Psi_2(r, \upsilon, \Lambda))=(0,0)$ 
whenever $t_1 \ne 0$. Moreover, $\Psi_1(0, \upsilon, \Lambda)=\Psi_2(0, \upsilon, \Lambda)=0$ for all $\Lambda$. The complete set of solutions is locally given by the three cases $r=0$, $\cos(\upsilon) = 0$ with $\Psi_2(r,\upsilon,\Lambda) = 0$, and $\Psi_1(r,\upsilon,\Lambda) = \Psi_2(r,\upsilon, \Lambda) = 0$; the first set is trivial, the second given by $\SS^{(k_2)}$, and the third yet to be found. By considering $\partial_r \Phi(0,v,\Lambda^*)$, as before we have that 
\[
\pa_\mu \Psi_1(0,\upsilon,\Lambda^*)=\frac{\langle \Diff_\mu \LL(\Lambda^*) \phi_1^*, \tilde w_1^*\rangle_Y}{\| \tilde w_1^*\|_Y^2}, 
\]
while
\begin{align*}
\pa_\mu \Psi_2(0,\upsilon,\Lambda^*)&=\frac{\langle \Diff_\mu \LL(\Lambda^*) \phi_1^*, \tilde w_2^*\rangle_Y}{\| \tilde w_2^*\|_Y^2}\cos(\upsilon)
+ \frac{\langle \Diff_\mu \LL(\Lambda^*) \phi_2^*, \tilde w_2^*\rangle_Y}{\| \tilde w_2^*\|_Y^2}\sin(\upsilon)\\
&=\frac{\langle \Diff_\mu \LL(\Lambda^*) \phi_2^*, \tilde w_2^*\rangle_Y}{\| \tilde w_2^*\|_Y^2}\sin(\upsilon),
\end{align*}
where we have used the fact that $\Diff_\mu \LL(\Lambda^*) \phi_1^*$ is orthogonal to $\tilde w_2^*$. Similarly (cf.~\ref{eq:Lexpression}),
\[
\pa_\alpha \Psi_1(0,\upsilon,\Lambda^*)=\frac{\langle \Diff_\alpha \LL(\Lambda^*) \phi_1^*, \tilde w_1^*\rangle_Y}{\| \tilde w_1^*\|_Y^2} 
\quad \text{ and } \quad
\pa_\alpha \Psi_2(0,\upsilon,\Lambda^*)=\frac{\langle \Diff_\alpha \LL(\Lambda^*) \phi_2^*, \tilde w_2^*\rangle_Y}{\| \tilde w_2^*\|_Y^2}\sin(\upsilon).
\]
Thus,
\begin{align*}
&\det \begin{pmatrix}
\pa_\mu \Psi_1(0,\upsilon,\Lambda^*) & \pa_\mu \Psi_2(0,\upsilon,\Lambda^*)\\
\pa_\alpha \Psi_1(0,\upsilon,\Lambda^*) &  \pa_\alpha \Psi_2(0,\upsilon,\Lambda^*)
\end{pmatrix}\\
&=
C\sin(\upsilon) \det 
\begin{pmatrix}
\langle \Diff_{\mu} \LL(\Lambda^*) \phi^*_1 ,\tilde w^*_1 \rangle_Y & 
\langle \Diff_{\mu} \LL(\Lambda^*) \phi^*_2 ,\tilde w^*_2  \rangle_Y\\
\langle \Diff_{\alpha} \LL(\Lambda^*) \phi^*_1,\tilde w^*_1 \rangle_Y & 
\langle \Diff_{\alpha} \LL (\Lambda^*) \phi^*_2,\tilde w^*_2 \rangle_Y 
\end{pmatrix},
\end{align*}
where
$C=\| \tilde w_1^*\|_Y^{-2} \| \tilde w_2^*\|_Y^{-2}\ne 0$. We can therefore apply the implicit function theorem under the assumptions of Theorem~\ref{thm:two-dimensional} as long as $\sin(\upsilon)\ne 0$. This gives us local, smoothly parametrized, solution sets at every point $(r,\upsilon) = (0,\upsilon_0)$ with $0 < \upsilon_0 < \pi$. On each compact interval in $[\delta, \pi-\delta] \subset (0,\pi)$ we may find a positive number $\varepsilon$ such the parametrization is uniformly valid for $0 < r < \varepsilon$, $\delta < \upsilon < \pi-\delta$.
\endproof

\begin{rem}
By varying $\lambda$ in a neighborhood of $\lambda^*$ we obtain a smooth family of bifurcating two-dimensional sheets of non-trivial solutions.
This is quite natural if one considers the bifurcation equation 
\eqref{eq:bifurcation2}: if $\lambda$ is varied near $\lambda^*$ one can adjust $\mu$ to make sure that the kernel remains two-dimensional.
One can check that the theorem also holds if $(\lambda, \alpha)$ are used as bifurcation parameters instead of $(\mu, \alpha)$. However, the vectors
$(\pa_\mu \Psi_1(0,0,\Lambda^*),\pa_\mu \Psi_2(0,0,\Lambda^*))$ and $(\pa_\lambda \Psi_1(0,0,\Lambda^*), \pa_\lambda \Psi_2(0,0,\Lambda^*))$ are parallel, and it is therefore not 
possible to replace $(\mu, \alpha)$ with $(\mu, \lambda)$ (the same problem appears when $k_2/k_1\in \Z$). Again this can be explained by looking at 
\eqref{eq:bifurcation2}. If the result were true with $(\mu, \lambda)$ as bifurcation parameters, we would obtain a family of bifurcating two-dimensional sheets indexed by $\alpha$. 
However,  Lemma \ref{lemma:kernels} shows that if $\alpha$ is varied near $\alpha^*$, it is impossible to adjust $\mu$ and $\lambda$ to keep the kernel two-dimensional.
\end{rem}

\section{Applications and examples}\label{sec:applications}
The waves found in this paper are all small-amplitude rotational gravity waves in water of finite depth. For simplicity we shall take $\kappa=1$ here so that the basic period is $2\pi$.

\begin{corollary}[Stokes waves with arbitrarily many critical layers]
\label{cor:one-dimensional}
Let $\beta \in (0,1)$, $k \in \Z^+$. For almost every $\alpha < -\pi^2$ there exists a $C^\infty$-curve 
\[
t \mapsto \left[ \eta, \hat\psi, Q, m_0, m_1 \right](t) \in C^{2+\beta}(\s) \times C^{2+\beta}(\overline{\hat\Omega}) \times \R^3 
\]
corresponding to even, $2\pi/k$-periodic solutions $(\eta,\psi,Q,m_0,m_1)$ of the steady water-wave problem \eqref{eq:problem}. 
These have a surface profile which rises and falls exactly once in every minimal period, and at least one critical layer. The number of critical layers can be chosen arbitrarily large by choosing $-\alpha$ large enough.
\end{corollary}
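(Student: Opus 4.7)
The plan is to stitch together the existing machinery: apply Lemma~\ref{lemma:kernels}(i) to locate parameters yielding a simple kernel at the desired mode~$k$, invoke Theorem~\ref{thm:one-dimensional} to obtain the nontrivial bifurcation curve, and then translate the result back to the original formulation via Lemmas~\ref{lemma:flattening} and~\ref{lemma:equivalence}. The count of critical layers is then controlled by analyzing the stagnation points of the laminar background flow~\eqref{eq:trivial} and a standard persistence argument.

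First, fix $\kappa = 1$, $\beta \in (0,1)$ and $k \in \Z_+$. By Lemma~\ref{lemma:kernels}(i), for almost every $\alpha < 0$ we can select $\mu^*$ and $\lambda^*$ (with $\cot(\lambda^*) \le 0$, say) so that the bifurcation equation~\eqref{eq:bifurcation2} is satisfied at wave number $k$ and at no other positive integer. For such a choice $\Lambda^* = (\mu^*, \alpha, \lambda^*)$, Lemma~\ref{lemma:Bifurcation condition} tells us that $\ker \LL(\Lambda^*) = \spn\{\phi^*\}$ with $\phi^*(x,s) = \cos(kx)\sin^*(\theta_k s)/\theta_k$, and by Proposition~\ref{prop:T} the kernel of $\Diff_w \FF(0,\Lambda^*)$ is spanned by $w^* = \TT(\Lambda^*)\phi^*$. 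Theorem~\ref{thm:one-dimensional} then produces a $C^\infty$-curve $t \mapsto (\overline w(t), \overline\mu(t))$ of nontrivial solutions to $\FF(w,\mu,\alpha,\lambda^*)=0$ in $\OO \times \R$, with $\overline w(t) = t w^* + O(t^2)$. Restricting attention to $2\pi/k$-periodic functions is automatic since $w^*$ has minimal period $2\pi/k$ in $x$ and $t$ is small. Undoing the flattening transform as in Lemma~\ref{lemma:flattening} (and using that $\eta \mapsto (\eta,\hat\psi)$ with $\hat\psi = \psi_0 + \hat\phi$ is continuously differentiable by Lemma~\ref{lemma:equivalence}) yields the corresponding curve $t \mapsto [\eta,\hat\psi,Q,m_0,m_1](t)$ of solutions to~\eqref{eq:problem}, where $Q$, $m_0$, $m_1$ are determined from $\Lambda^*$ and $\overline\mu(t)$ through~\eqref{eq:Q} and the expressions following it.

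The statement that the surface rises and falls exactly once per minimal period follows because, to leading order, $\eta(t) = -t\phi^*|_{s=1}/(\psi_0)_s(1) + O(t^2)$ is proportional to $\cos(kx)$ with a nonzero coefficient; strict monotonicity between crest and trough is then immediate for $|t|$ sufficiently small. For the critical-layer count, observe that the horizontal velocity of the laminar flow is $(\psi_0)_y(y) = -\mu^* \theta_0 \sin(\theta_0(y-1)+\lambda^*)$ with $\theta_0 = \sqrt{-\alpha}$, so it has exactly $N(\alpha) = \lfloor \theta_0/\pi \rfloor + O(1)$ zeros in $(0,1)$ whenever $\alpha < -\pi^2$, and $N(\alpha) \to \infty$ as $-\alpha \to \infty$. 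Each such zero is a nondegenerate stagnation point of the laminar flow, around which the streamlines form a family of nested closed curves (a cat's-eye pattern); this persists under small perturbations by the implicit function theorem applied to the equation $(u-c, v) = 0$, exactly as in \cite{Wahlen09}. Hence the perturbed flow inherits at least $N(\alpha)$ critical layers for $|t|$ small, proving both the existence of at least one critical layer (since $\alpha < -\pi^2$ forces $N(\alpha) \ge 1$) and the fact that the number grows without bound as $-\alpha$ does.

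The main point to be careful about is the persistence of the critical layers under the nonlinear perturbation. Since Lemma~\ref{lemma:kernels}(i) only guarantees the required parameter values for almost every $\alpha$, the "arbitrarily many critical layers" claim is to be read: for any prescribed integer $N$, there exists a set of $\alpha < -\pi^2$ of positive measure (in fact of full measure within some interval $(-\infty, -N^2\pi^2)$) for which the construction above yields a bifurcation curve whose waves carry at least $N$ critical layers.
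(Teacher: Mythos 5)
Your proposal follows essentially the same route as the paper: Lemma~\ref{lemma:kernels}~i) plus Lemma~\ref{lemma:Lproperties} to produce the one-dimensional kernel, Theorem~\ref{thm:one-dimensional} for the bifurcating curve, Lemmas~\ref{lemma:flattening} and~\ref{lemma:equivalence} to return to \eqref{eq:problem}, and the leading-order term $tC(\Lambda^*,k)\cos(kx)$ of $\eta$ for the crest/trough count. The one place where your sketch is imprecise is the persistence of critical layers: a zero $y_0$ of $(\psi_0)_y$ is \emph{not} a nondegenerate stagnation point of the laminar flow --- the whole line $\{y=y_0\}$ consists of stagnation points and the Jacobian of $(u-c,v)$ there is singular, so the implicit function theorem applied to $(u-c,v)=0$ does not directly yield isolated persisting stagnation points; the correct argument analyzes the critical points of the perturbed stream function $\psi_0+t\phi^*+O(t^2)$ near $\sin(kx)=0$, $y\approx y_0$. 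The paper itself only asserts this step and defers the details to \cite{eev10}, so your treatment is at the same level of completeness, but the specific justification you give for it would not survive scrutiny as written.
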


\begin{proof}
Lemma~\ref{lemma:kernels}~i) yields the existence of $\mu^*$, $\lambda^*$ and, via Lemma~\ref{lemma:Lproperties}, the one-dimensional kernel 
$w^* = \TT(\Lambda^*) \cos(kx) \sin^*(\theta_k s)/\theta_k$ needed for Theorem~\ref{thm:one-dimensional} (here $\alpha^* := \alpha$). By identifying $m_1 := \mu(t) \cos(\lambda)$ and 
$m_2 := \mu(t) \cos(\lambda - \sqrt{|\alpha^*|})$, we obtain from Lemma~\ref{lemma:equivalence} the solutions of the original problem, as well as the $C^\infty$-dependence with respect to 
$t$. Notice that also $Q = Q(\mu(t),\alpha^*,\lambda^*)$ depends on the bifurcation parameter $t$ through the formula \eqref{eq:Q}. 
If we identify $w(t) = \TT(t) \phi(t)$ we have $\Diff_t \eta(0) = \eta_{\left(\Diff_t \phi(0)\right)}$:
\[
\Diff_t \left( \frac{-\phi(t)}{(\psi_0)_s(t)}\right)\bigg|_{t = 0} = - \left( \frac{\Diff_t \phi(t)}{(\psi_0)_s(t)} + \phi(t) \Diff_t \frac{1}{(\psi_0)_s(t)} \right)\bigg|_{t = 0} = - \frac{\Diff_t \phi(0)}{(\psi_0)_s(0)},
\]
since $t = 0$ corresponds to the trivial solution $\phi(0) = 0$. In view of that
\[
\eta_{\Diff_t \phi(0)} = \frac{\sin^*(\theta_k) \cos(kx)}{\theta_k \mu^* \theta_0 \sin(\lambda)} =: C(\Lambda^*, k) \cos(kx),
\]
we have $C(\Lambda^*, k) \neq 0$ and  the local expression
\[
\eta(x;t) = t\, C(\Lambda^*, k) \cos(kx) + O \left( t^2 \right), \qquad\text{ as }\quad t \to 0, 
\]
in the $C^{2+\beta}(\s,\R)$-metric. This guarantees that the surface profile has one crest and one trough per period for $t$ small enough. The number of critical layers for small enough solutions along the bifurcation curve can be seen directly from the number of zeros of $(\psi_0)_y = -\mu \theta_0 \sin(\theta_0(y-1) + \lambda)$ (for details, see~\cite{eev10}). 
\end{proof}

\begin{figure}
\begin{center}
\includegraphics[clip=true,trim = 80 0 65 25, width=0.4\linewidth]{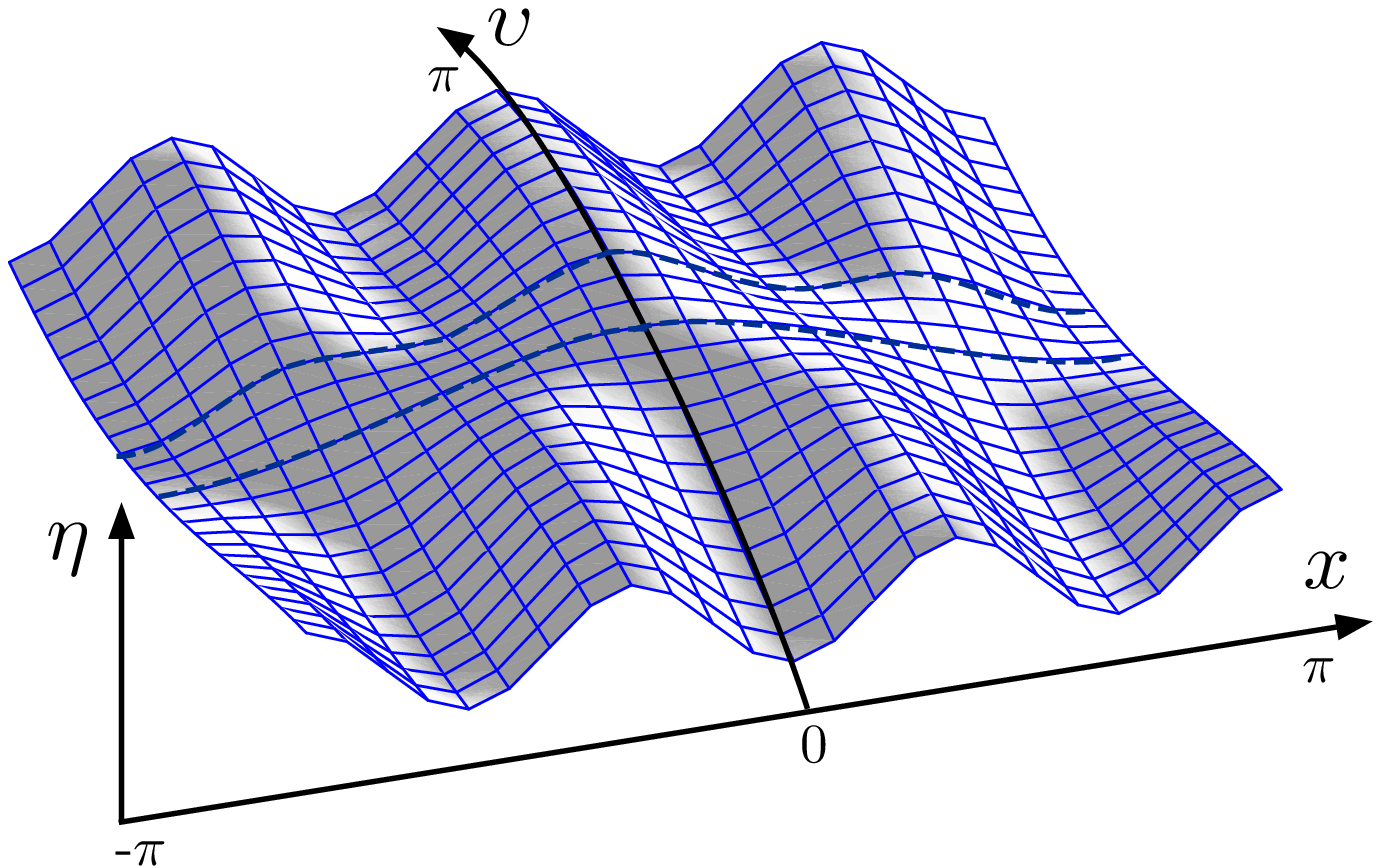}
\quad
\includegraphics[width=0.4\linewidth]{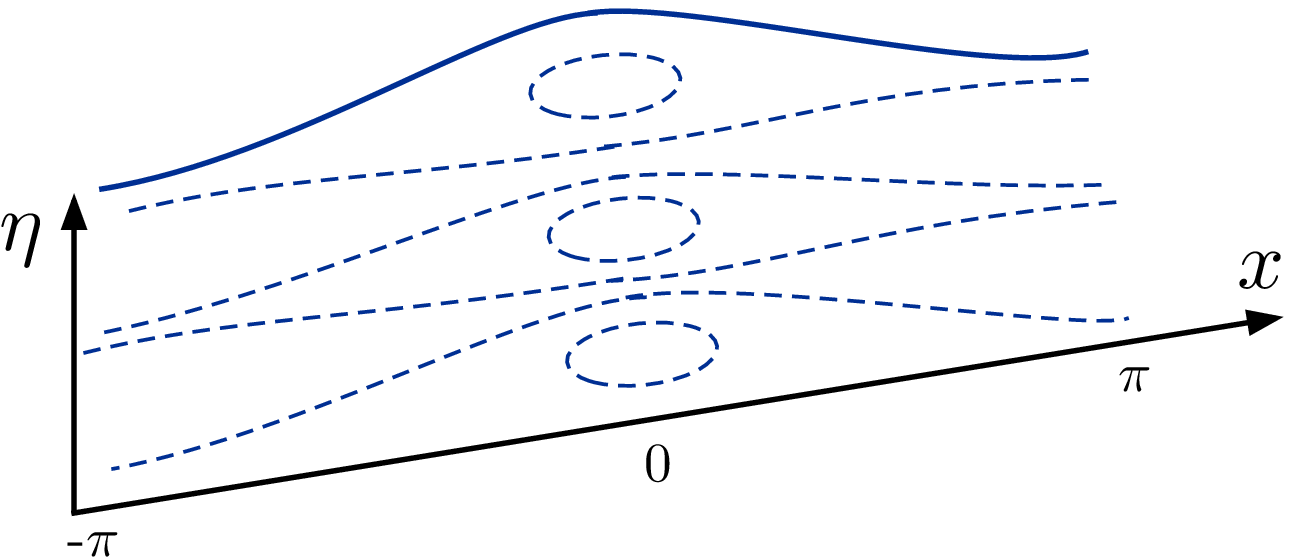}
\end{center}
\caption{\small{\bfseries Rotational gravity water waves bifurcating from one- and two-dimensional kernels.} Left: the qualitative features of surface profiles for the small-amplitude waves found in Corollary~\ref{cor:sin-sinh} along the bifurcation parameter $\upsilon$. The marked lines show: at $\upsilon = \pi/2$ a typical Stokes wave with a single crest within each  period; at $\upsilon \approx 2\pi/3$ a wave with several crests within each minimal period. Right: one typical rotational behavior of the waves found in Corollary~\ref{cor:one-dimensional} bifurcating from a single eigenvalue. The dotted lines indicate streamlines. Notice that the number of vortices can be chosen arbitrarily large.}  
\label{fig:two-dim}
\end{figure}

\begin{corollary}[Doubly-periodic waves with at least one critical layer]
\label{cor:sin-sinh}
Let $\beta \in (0,1)$ and $k \geq 5$ be an integer. There exists a family
\begin{equation*}\label{eq:doublewaves}
(r,\upsilon) \mapsto \left[ \eta,\psi,Q,m_0,m_1,\alpha \right](r,\upsilon) \in C^{2+\beta}(\s) \times C^{2+\beta}(\overline\Omega) \times \R^3 \times (-k^2,-\pi^2-1), 
\end{equation*}
of nontrivial, even, $2\pi$-periodic solutions of the steady water-wave problem \eqref{eq:problem} with at least one critical layer. For each $\upsilon \in (0,\pi)$ their surface elevation satisfies
\begin{equation}\label{eq:twodimsurface}
\eta(x;r) =  r C_1 \cos(\upsilon) \cos(x) +  r C_2 \sin(\upsilon) \cos(kx) + O\left( r^2 \right), \quad C_1, C_2 \neq 0,
\end{equation}
in the $C^{2+\beta}(\s,\R)$-metric as $r \to 0$. 
\end{corollary}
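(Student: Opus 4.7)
The plan is to apply Theorem~\ref{thm:two-dimensional}(ii) with the basic wave number $\kappa = 1$, $k_1 = 1$, and $k_2 = k$. Since $k\ge 5$, we have $k_2^2\ge 25 > 1 + 9\pi^2/4 = k_1^2 + 9\pi^2/4$, so Lemma~\ref{lemma:kernels}(ii), applied with any $\lambda^*$ satisfying $\cot(\lambda^*) \le 0$, produces $\alpha^* \in (-k^2, -1-\pi^2)$ and $\mu^*$ for which the bifurcation equation~\eqref{eq:bifurcation2} holds precisely at $k_1 = 1$ and $k_2 = k$; by the last sentence of Remark~\ref{rem:bifurcation}, the choice $k_1 = \kappa$ gives a kernel of dimension exactly two.

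Next, the additional hypothesis $a \notin \{0,1\}$ of Theorem~\ref{thm:two-dimensional} must be verified. From the construction of the set $I$ in the proof of Lemma~\ref{lemma:kernels}(ii), the admissible values of $|\alpha|$ are chosen so that $h(t;k_1) \in (1,\infty)$; hence $a > 1$. Since $k_2/k_1 = k \in \Z$, Theorem~\ref{thm:two-dimensional}(ii) then yields, for each $\delta > 0$, a smooth sheet of nontrivial solutions $\overline w(r,\upsilon) = r\cos(\upsilon)w_1^* + r\sin(\upsilon)w_2^* + O(r^2)$ valid for $0 < r < \varepsilon_\delta$ and $\delta < |\upsilon| < \pi-\delta$. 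Taking $\delta \to 0^+$ covers every $\upsilon \in (0,\pi)$ at correspondingly small $r$.

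To identify the surface profile I will use Proposition~\ref{prop:P}: with $(\psi_0)_s(1) = -\mu^*\theta_0\sin(\lambda^*)$, a direct computation gives
\[
\eta_{\phi_j^*}(x) \;=\; \frac{\sin^*(\theta_{k_j})}{\theta_{k_j}\,\mu^*\theta_0\sin(\lambda^*)}\cos(k_j x) \;=:\; C_j\cos(k_j x).
\]
For $k_1 = 1$, the condition $-\alpha^* - 1 \in (\pi^2, 9\pi^2/4)$ forces $\theta_{k_1} \in (\pi, 3\pi/2)$, so $\sin(\theta_{k_1}) \ne 0$ and $C_1 \ne 0$; for $k_2 = k$, the condition $k^2 + \alpha^* > 0$ gives $\sin^*(\theta_{k_2}) = \sinh(\theta_{k_2}) \ne 0$, so $C_2 \ne 0$. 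Combined with the expansion of $\overline w(r,\upsilon)$, this yields~\eqref{eq:twodimsurface}. The original fields $(\eta,\psi,Q,m_0,m_1)$ are then reconstructed via Lemmas~\ref{lemma:flattening} and~\ref{lemma:equivalence} together with the explicit formulas for $Q$, $m_0$, $m_1$ in Section~\ref{sec:preliminaries}.

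Finally, the trivial flow satisfies $(\psi_0)_y(y) = -\mu^*\theta_0\sin(\theta_0(y-1)+\lambda^*)$, which vanishes in $(0,1)$ iff the interval $(\lambda^* - \theta_0, \lambda^*)$ contains a multiple of $\pi$. Since $\theta_0 = \sqrt{-\alpha^*} > \sqrt{1+\pi^2} > \pi$, this is automatic, producing at least one interior stagnation point for the laminar background; by continuity it persists for small-amplitude bifurcating solutions and yields the claimed critical layer, as detailed in~\cite{eev10}. The main obstacle will be the careful bookkeeping: matching to Case III in the proof of Theorem~\ref{thm:two-dimensional} (where $k_1^2 + \alpha^* < 0 < k_2^2 + \alpha^*$), verifying the nondegeneracy $a \ne 1$ in that regime, and confirming that both Fourier modes enter the surface profile with nonvanishing amplitude.
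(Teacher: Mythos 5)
Your proposal is correct and follows essentially the same route as the paper: Lemma~\ref{lemma:kernels}~ii) with $k_1=1$, $k_2=k$ to get an exactly two-dimensional kernel with $a>1$ (via \eqref{eq:aneq1}), Theorem~\ref{thm:two-dimensional}~ii) since $k_2/k_1\in\Z$, the surface projection to read off \eqref{eq:twodimsurface}, and the zero of $(\psi_0)_y$ forced by $\theta_0>\pi$ for the critical layer. Your verifications of $C_1,C_2\neq 0$ and of the relevance of Case~III are slightly more explicit than the paper's, but the argument is the same.
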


\begin{proof}
The assumption on $k$ guarantees that we may pick any $\lambda^*$ with $\cot(\lambda^*)\le 0$ and use Lemma~\ref{lemma:kernels}~ii) with $k_1 = 1$, $k_2 = k$, 
yielding $\mu^*$ and $\alpha^*$. The proof of Lemma~\ref{lemma:kernels}~ii) is devised so as to have $a \not\in \{0,1\}$ (this is \eqref{eq:aneq1}). 
That, in turn, allows us to apply Theorem~\ref{thm:two-dimensional} ii). The constants $m_0$ and $m_1$ are identified as in the proof of Corollary~\ref{cor:one-dimensional} 
where now also $\overline \alpha(r,\upsilon)$ depends on the bifurcation parameters, and \eqref{eq:Q} means that $Q = Q(\overline \mu(r,\upsilon),\overline \alpha(r,\upsilon), \lambda)$. 
Since $\alpha^* \in (-k^2,-\pi^2-1)$ according to Lemma~\ref{lemma:kernels}, and $\overline \alpha(0,\upsilon) = \alpha^*$ for all $\upsilon \in (0,\pi)$, 
we still have $\overline \alpha(r,\upsilon) \in (-k^2,-\pi^2-1)$ if, for each $\upsilon \in (0,\pi)$, we choose $|r|$ small enough. 
The formula \eqref{eq:twodimsurface} is obtained from $\partial_r \overline w(0,\upsilon)=\cos(\upsilon) \TT(\Lambda^*)\phi_1^*+\sin(\upsilon) \TT(\Lambda^*)\phi_2^*$ just as in 
the proof of Corollary~\ref{cor:one-dimensional}, and the existence of a critical layer follows from \eqref{eq:trivial} and the fact that $\alpha < -\pi^2$. 
\end{proof}

\begin{example}[Doubly-periodic waves with several critical layers]
\label{ex:sin-sinh}
There exists a family of nontrivial, even, $2\pi$-periodic solutions of the steady water-wave problem \eqref{eq:problem}, with approximate surface elevation 
\[
\eta(x) = C_1 t_1 \cos(4x) +  C_2 t_2 \cos(7x), \quad C_1, C_2 \neq 0.
\]
and three critical layers. 
\end{example}

{\em Sketch of proof}.
One checks that $k_1 = 4$, $k_2 = 7$, and $\alpha^* \approx -60$ fulfills \eqref{eq:a} for $a \approx 18$. 
Those $k_i$'s are thus an example when Lemma~\ref{lemma:kernels}~iii) holds exactly for $k_1$ and $k_2$. 
The proof then follows the lines of that of Corollary~\ref{cor:sin-sinh}, except that we now use Theorem~\ref{thm:two-dimensional} i). 
Since $\sqrt{|\alpha^*|} > 2\pi$ it follows from \eqref{eq:trivial} that, for $t_1^2+t_2^2$ small enough, the resulting waves have at least two critical layers. By choosing first $\lambda$ and then $\mu$ appropriately in \eqref{eq:bifurcation2} one obtains three critical layers.
\endproof

\section*{Acknowledgments}

Part of this research was conceived during the workshop on \emph{Wave Motion} in Oberwolfach, spring $2009$; M. E. and E. W. thank the organizers for their kind invitation. 
E. W. was supported by the Alexander von Humboldt foundation. The suggestions of the referees are gratefully acknowledged.

\bibliographystyle{siam}
\bibliography{critical}

\end{document}